\documentclass[a4paper,12pt]{article}
\baselineskip20pt
\usepackage{amsthm}
\usepackage{amsmath}
\usepackage{latexsym}
\usepackage{url}
\usepackage{color}
\usepackage{comment}
\numberwithin{equation}{section}

\def\R{{\bf R}}

\def\N{{\bf N}}

\def\d{\displaystyle}
\def\e{{\varepsilon}}

\def\p{\partial}

\newcommand{\dt}{\partial_{t}}%

\newcommand{\ga}{\gamma}

\newcommand{\ka}{\kappa}
\newcommand{\si}{\sigma}
\newcommand{\la}{\lambda}

\newcommand{\supp}{\operatorname{supp}}

\newcommand{\I}{\infty}

\newcommand{\EQS}[1]{\begin{align} #1 \end{align}}
\newcommand{\EQQS}[1]{\begin{align*} #1 \end{align*}}
\newcommand{\EQQ}[1]{\begin{equation*} \begin{split} #1
 \end{split} \end{equation*}}

\newcommand{\LR}[1]{{\langle {#1} \rangle }}

\newtheorem{thm}{Theorem}[section]
\newtheorem{cor}{Corollary}[section]
\newtheorem{lem}{Lemma}[section]
\newtheorem{prop}{Proposition}[section]
\newtheorem{rem}{Remark}[section]
\newtheorem{definition}{Definition}[section]

\title{Critical exponent for the wave equation with a time-dependent scale invariant damping and a cubic convolution}
\author{
Masahiro Ikeda
\footnote{Department of Mathematics, Faculty of Science and Technology, Keio University, 3-14-1 Hiyoshi,
Kohoku-ku, Yokohama, 223-8522, Japan/Center for Advanced Intelligence Project, RIKEN, Japan. email: masahiro.ikeda@keio.jp/masahiro.ikeda@riken.jp.}
,
Tomoyuki Tanaka
\footnote{Graduate School of Mathematics, Nagoya University, Chikusa-ku, Nagoya, 464-8602, Japan/Center for Advanced Intelligence Project, RIKEN, Japan. e-mail: d18003s@math.nagoya-u.ac.jp/tomoyuki.tanaka.hw@riken.jp}
\ and \
Kyouhei Wakasa
\footnote{Department of Creative Engineering, National Institute of Technology, Kushiro College, 2-32-1 Otanoshike-Nishi, Kushiro-Shi, Hokkaido 084-0916, Japan. e-mail: wakasa@kushiro-ct.ac.jp.
}
}
\date{
\[
\begin{array}{llll}
\mbox{\footnotesize{\bf Keywords:}}
& \mbox{\footnotesize Wave equation, Time-dependent scale invariant damping,}\\
& \mbox{\footnotesize Cubic convolution, Global existence, Blow-up,}\\
& \mbox{\footnotesize Critical exponent, Lifespan, Dissipation}\\
\mbox{\footnotesize{\bf MSC2010:}}
& \mbox{\footnotesize Primary 35B33 Secondary 35B44, 35L05, 35L71, 35B45}\\
\end{array}
\]
}
\pagestyle{plain}

%%%%%%%%%%%%%%%%%%%%%%%%%%%%%%%%%%%%%%%%%%%%%%%
%%%%%%%%%%%%%%%%%%% ABSTRACT %%%%%%%%%%%%%%%%%%%%%
%%%%%%%%%%%%%%%%%%%%%%%%%%%%%%%%%%%%%%%%%%%%%%%

\begin{document}
\maketitle
\begin{abstract}
In the present paper, we study the Cauchy problem for the wave equation with a time-dependent scale invariant damping $\frac{2}{1+t}\dt v$ and a cubic convolution $(|x|^{-\ga}*v^2)v$ with $\ga\in \left(-\frac{1}{2},3\right)$ in three spatial dimension for initial data $\left(v(x,0),\partial_tv(x,0)\right)\in C^2(\R^3)\times C^1(\R^3)$ with a compact support, where $v=v(x,t)$ is an unknown function to the problem on $\R^3\times[0,T)$. Here $T$ denotes a maximal existence time of $v$.

The first aim of the present paper is to prove unique global existence of the solution to the problem and asymptotic behavior of the solution in the supercritical case $\gamma\in (0,3)$, and show a lower estimate of the lifespan in the critical or subcritical case $\gamma\in \left(-\frac{1}{2},0\right]$. The essential part for their proofs is to derive a weaker estimate under the weaker condition (Proposition \ref{lm:apriori1} and Theorem \ref{lem:potential}) than the case without damping (see \cite{T03}) and to recover the weakness by the effect of the dissipative term.

The second aim of the present paper is to prove a small data blow-up and the almost sharp upper estimate of the lifespan for positive data with a compact support in the subcritical case $\gamma\in \left(-\frac{1}{2},0\right)$. The essential part for the proof is to refine the argument for the proof of Theorem 6.1 in \cite{H20} to obtain the upper estimate of the lifespan.

Our two results determine that a critical exponent $\gamma_c$ which divides global existence and blow-up for small solutions is $0$, namely $\gamma_c=0$. As the result, we can see that the critical exponent shift from $2$ to $0$ due to the effect of the scale invariant damping term.
\end{abstract}

\tableofcontents

%%%%%%%%%%%%%%%%%%%%%%%%%%%%%%%%%%%%%%%%%%%%%%%%%%
%%%%%%%%%%%%%%%%%%%%% SECTION1 %%%%%%%%%%%%%%%%%%%%%%%
%%%%%%%%%%%%%%%%%%%%%%%%%%%%%%%%%%%%%%%%%%%%%%%%%%

\section{Setting of our problem and known results}

%\CR{Describe idea and defficulity of the proof.}
\ \ \ \
In the present paper, we study the Cauchy problem
for the wave equation with a time-dependent scale invariant damping and a cubic convolution in three spatial dimension:
\EQS{\label{IVP}
\begin{cases}
  \dt^2 v-\Delta v+\displaystyle{\frac{\mu}{1+t}}\dt v=(V_{\gamma}*|v|^2)v, &(x,t)\in\R^3\times [0,T),\\
  v(x,0)=v_0(x), &x\in\R^3,\\
  \dt v(x,0)=v_1(x), &x\in\R^3.
\end{cases}
}
Here $T=T(v_0,v_1)\ge 0$ denotes the maximal existence time of the function $v$, which is called lifespan (see Definition \ref{def1}), $V_{\gamma}(x):=|x|^{-\gamma}$ is a given function on $\R^3$ and is called the power potential, where $\gamma\in \left(-\frac{1}{2},3\right)$ is a constant independent of $x,t$, $*$ stands for the convolution in the space variables, $\mu\ge 0$ is a non-negative constant, $v=v(x,t)\in C(\R^3\times[0,T))$ is an unknown function on $\R^3\times [0,T)$, $(v_0,v_1)\in C^2(\R^3)\times C^1(\R^3)$ is a given $\R^2$-valued function on $\R^3$ with a compact support.

In the physical context, the stationary problem corresponding to \eqref{IVP} with a mass term and the Coulomb potential, i.e. $\ga=1$
\[
    -\Delta v+v=(|x|^{-1}*|v|^2)v,\ \ \ x\in \R^n
\]
was proposed by Hartree as a model for the helium atom, where $n\ge 3$.

Menzala and Strauss \cite{MS82} studied the Cauchy problem \eqref{IVP} with a potential $V\in L^{\frac{n}{3}}(\R^n)+L^{\infty}(\R^n)$ instead of the power potential $|x|^{-\gamma}$ and without the dissipative term ($\mu=0$) in three or higher spatial dimension $n\ge 3$. They proved a large data local well-posedness result and a small data scattering result for the above $V$ in the energy space $H^1(\R^n)\times L^2(\R^n)$, where $H^1(\R^n)$ denotes the usual first order $L^2$-based Sobolev space. We note that the power potential $|x|^{-\gamma}$ with $\gamma<0$ does not belong to $V^{\frac{n}{3}}(\R^n)+L^{\infty}(\R^n)$.

The first equation of \eqref{IVP} is invariant under the scale transformation $v\mapsto v_{\sigma}$ for $\sigma>0$ given by
\begin{equation}
\label{scale}
  v_{\sigma}(x,t):=\si^{\frac{5-\gamma}{2}}v\left(\si x,\si(1+t)-1\right)
\end{equation}
on $\R^3\times [0,(T+1)/\sigma-1)$. Therefore the damping term $\frac{\mu}{1+t}\partial_tu$ is called the scale invariant damping term and is known as a threshold between“wave-like” region and “heat-like” region (see \cite{Wi06} for linear problems).

In this paper we consider the case of the three spatial dimension $n=3$ and a specific coefficient $\mu=2$.

The first aim is to prove small data unique global existence for the problem (\ref{IVP}) in the supercritical case $\gamma\in (0,3)$. Moreover, we prove the sharp space-time decay estimate of the parabolic type of the global solution $v$ as $t\rightarrow\infty$ in the supercritical case $\gamma\in (0,3)$. On the other hand, we show a lower estimate of the lifespan $T$ for small solutions in the critical or subcritical case $\gamma\in \left(-\frac{1}{2},0\right]$. The essential part for their proofs is to derive a weaker estimate under the weaker condition (see Proposition \ref{lm:apriori1} and Theorem \ref{lem:potential} for more precise) than the case without damping (see \cite{T03}) and to recover the weakness by the effect of the dissipative term $\frac{2}{1+t}\partial_tv$.

The second aim of the present paper is to prove a small data blow-up result and the almost sharp upper estimate of the lifespan $T$ for small solutions for suitable non-negative data with a compact support in the subcritical case $\gamma\in \left(-\frac{1}{2},0\right)$. The essential part for the proof is to estimate the integral of the solution with respect to the space variable more delicately than the proof of Theorem 6.1 in \cite{H20} in order to get an upper estimate of the lifespan (see Section \ref{blow-up}).

Our two results determine that a critical exponent $\gamma_c$ to the problem (\ref{IVP}) with $\mu=2$ which divides global existence and blow-up for small solution is $0$, namely $\gamma_c=0$. As the result, we can see that the critical exponent shift from $2$ to $0$ due to the effect of the scale invariant damping term ($\frac{\mu}{1+t}\partial_t v$) compared to the case without the damping ($\mu=0$).

In the undamped $(\mu=0)$ and the power type nonlinearity $|v|^p$ with $p>1$ case, i.e.,
\EQS{\label{eq_wave1}
  \begin{cases}
    \dt^2 v-\Delta v= |v|^p, &(x,t)\in\R^n\times (-T,T),\\
    v(x,0)=v_0(x), &x\in\R^n,\\
    \dt v(x,0)= v_1(x), &x\in\R^n,
  \end{cases}
}
determining a critical exponent which divides global existence and blow-up for small solutions has been extensively studied by many authors (see \cite{YZ06, Z07} and their references). It is well known that the critical exponent $p_c$ for rapidly decaying solutions as $|x|\rightarrow\infty$ to the problem \eqref{eq_wave1} is the Strauss exponent $p_0$, namely $p_c=p_0$, where the Strauss exponent $p_0$ is given by
\begin{align}
\label{strauss}
	p_0=p_{0}(n):=
	\left\{
	\begin{array}{ll}
		\infty, & (n=1),\\
		\d\frac{n+1+\sqrt{n^2+10n-7}}{2(n-1)}, & (n \geq 2).
	\end{array}
	\right.
\end{align}
More precisely, small data global existence to the problem (\ref{eq_wave1}) for smooth sufficiently rapidly decaying data $(v_0,v_1)$ as $|x|\rightarrow\infty$ holds if $p>p_0(n)$, on the other hand, small data blow-up occurs for suitable non-negative data $(v_0,v_1)$ decaying rapidly as $|x|\rightarrow\infty$ if $1<p\le p_0(n)$ (see \cite{ISW19} for recent progress).
%We remark that the blow-up occurs to \eqref{eq_wave1} with $p=p_0(n)$ .
%We note that the compactness of the support of the initial data and the positity of the nonlinear term are often assumed for the blow-up results.
%The problem determining the critical exponent to \eqref{IVP} was extensively studied and is Strauss conjecture.
%On the other hand, there is a problem of what happens if the compactness of the support of the initial data for the initial data are not assumed.

Next we recall several results for the undamped $(\mu=0)$ and a cubic convolution
case, i.e.,
\EQS{\label{eq_wave2}
  \begin{cases}
    \dt^2 v-\Delta v=(V_{\zeta}*v^2)v,&(x,t)\in\R^n\times (-T,T),\\
    v(x,0)=v_0(x), &x\in\R^n,\\
    \dt v(x,0)=v_1(x), &x\in\R^n,
  \end{cases}
}
where $n\in \N$ with $n\ge2$, $V_{\zeta}(x):=|x|^{-\zeta}$ is the inverse power potential with $\zeta\in (0,n)$ and
$*$ stands for the convolution in the space variables.

Hidano \cite{H20} proved scattering results (Theorem 1 and Theorem 2 in \cite{H20}) related to the problem (\ref{eq_wave2}) with $\zeta\in \left(2,\frac{5}{2}\right)$ in three dimensional case $n=3$ for small data decaying rapidly as $|x|\rightarrow \infty$. The proofs of them are based on the so-called vector field method introduced in \cite{K85}. He also proved a small data blow-up result (Theorem 6.1 in \cite{H20}) to (\ref{eq_wave2}) with $\zeta\in (0,2)$ for some positive initial data
with a compact support. From his results, we see that the critical exponent $\zeta_c$ which divides global existence and blow-up to the problem (\ref{eq_wave2}) with compactly supported data is $2$, that is, $\zeta_c=2$.

Tsutaya \cite{T03} studied the Cauchy problem (\ref{eq_wave2}) with initial data $(v_0,v_1)$ decaying slowly as $|x|\rightarrow\infty$. Here slowly decaying data means a pair of given functions $(v_0,v_1)$ satisfying the following spatial decay condition:
\begin{equation}
\label{slow-decay-1}
    v_0(x)=O(|x|^{-\nu}),\ \ \ v_1(x)=O\left(|x|^{-(1+\nu)}\right)
\end{equation}
as $|x|\rightarrow\infty$ with some $\nu>0$.
Tsutaya proved a small data global existence result (Theorem 2.1 in \cite{T03}) to the problem (\ref{eq_wave2}) with $\zeta\in (2,3)$ for the initial data $(v_0,v_1)$ satisfying (\ref{slow-decay-1}) with a scaling subcritical exponent $\nu>\frac{5-\zeta}{2}$. On the other hand, he proved a
small data blow-up result (Theorem 3.4 in \cite{T03}) to the problem (\ref{eq_wave2}) with $\zeta\in (0,3)$ for the data $(v_0,v_1)$ satisfying (\ref{slow-decay-1}) with a scaling supercritical $\nu\in (\frac{1}{2}, \frac{5-\gamma}{2})$ and the more precise estimates
\begin{equation}
\label{slow-decay-2}
   v_0(x)\equiv 0,\ \ \ v_1(x)\ge \frac{\epsilon}{(1+|x|)^{1+\nu}}
\end{equation}
on $\R^3$, where $\epsilon>0$ is any positive number. We note that a compactly supported function $v_1$ on $\R^3$, which is considered in the present paper, does not satisfy the above condition (\ref{slow-decay-2}).

Kubo \cite{K04} studied the Cauchy problem (\ref{eq_wave2}) with the critical exponent $\zeta=\zeta_c=2$ in three spatial dimension $n=3$ and proved a small data global existence result for the data $(v_0,v_1)$ satisfying the spatial decay condition (\ref{slow-decay-1}) with the scaling subcritical exponent $\nu\in \left(\frac{3}{2},2\right)$. We note that if the support of data $(v_0,v_1)$ is compact, it is obvious that the function $(v_0,v_1)$ satisfies this decay condition. From his result \cite{K04}, we see that there exists a unique global solution to (\ref{eq_wave2}) in the critical case $\zeta=\zeta_c=2$ and three dimensional case $n=3$ for small initial data decaying rapidly as $|x|\rightarrow\infty$, whereas from the Strauss conjecture about the power nonlinearity $|v|^p$, we see that a local solution to (\ref{eq_wave1}) with the critical exponent $p=p_0(n)$ can not be extended globally for some positive data $(v_0,v_1)$ even if $(v_0,v_1)$ is small with respect to a certain norm, and has a compact support.

We remark that Karageorgis and Tsutaya \cite{KT} reported a small data blow-up result of the problem (\ref{eq_wave2}) in the critical case $\zeta=\zeta_c=2$ and three dimensional case $n=3$ for suitable data $(v_0,v_1)$ satisfying the slowly decaying condition (\ref{slow-decay-2}) as $|x|\rightarrow\infty$ with the scaling critical decay exponent $\nu=\nu_c=\frac{3}{2}$.

Next we recall several results for the Cauchy problem to the wave equation with the scale invariant damping, i.e. $\frac{\mu}{1+t}\partial_t v$ and the power type nonlinearity $|v|^p$:
\EQS{\label{IVP-scale}
  \begin{cases}
    \dt^2 v-\Delta v+\displaystyle{\frac{\mu}{1+t}}\dt v
      =|v|^p, & (x,t)\in\R^n\times [0,T),\\
      v(x,0)=v_0(x), &x\in\R^n,\\
      \dt v(x,0)=v_1(x), &x\in\R^n.
  \end{cases}
}
Existence, blow-up and asymptotic behavior in time of solutions for the problem (\ref{IVP-scale}) have been extensively studied (see \cite{D15, Wakasugi14, LTW17, IkeSoba, ISWk19, TL1, TL2} for example). We only recall results closely related to the present study ($\mu=2$). In studying the problem (\ref{IVP-scale}) with a specific constant $\mu=2$, the Liouville transform $v\mapsto u$ given by
\begin{equation}
\label{Liou}
   u(x,t):=(1+t)^{\frac{\mu}{2}}v(x,t)
\end{equation}
is useful, where $v=v(x,t)$ is a solution to the problem \eqref{IVP-scale} on $\R^n\times [0,T)$. Then the transformed function $u$ satisfies the following equations:
%Let us consider the case of $\mu=2$.
%If we set $u=(1+t)v$, then (\ref{IVP}) can be written as
\EQS{\label{IVP-1}
  \begin{cases}
    \dt^2 u-\Delta u+\displaystyle{\frac{\mu(2-\mu)}{4(1+t)^2}} u =\frac{|u|^p}{(1+t)^{\mu(p-1)/2}}, &(x,t)\in\R^n\times [0,T),\\
    u(x,0)=v_0(x), &x\in\R^n,\\
    \displaystyle \dt u(x,0)=\frac{\mu}{2}v_0(x)+v_1(x), &x\in\R^n.
  \end{cases}
}
When $\mu=0$ or $\mu=2$, the mass term $\frac{\mu(2-\mu)}{4(1+t)^2}u$ vanishes. Especially when $\mu=2$, the first equation of (\ref{IVP-1}) becomes the classical wave equation with a power nonlinearity $|u|^p$ with an additional time decay facter $\frac{1}{(1+t)^{p-1}}$, that is, the transformed function $u$ satisfies
\[
     \partial_t^2u-\Delta u=\frac{1}{(1+t)^{p-1}}|u|^p
\]
for $(x,t)\in \R^n\times[0,T)$. In the case of $\mu=2$, by using this transformation it is proved in \cite{Wakasugi14, D15, DLR15} that the critical exponent $p_c=p_c(n)$ for $n=1,2,3$, which divides global existence and blow-up for small solutions to the problem (\ref{IVP-1}) for smooth data $(v_0,v_1)$ decaying rapidly as $|x|\rightarrow\infty$, is given by
\[
p_c(n)=\max\{ p_F(n),p_0(n+2)\}.
\]
Here $p_0$ is the Strauss exponent given by (\ref{strauss}) and $p_F=p_F(n)$ for $n\in \N$ is defined by $p_{F}(n):=1+\frac{2}{n}$
and is called the Fujita exponent, which is the $L^1$-scaling critical exponent for the semilinear heat equation
\[
\dt \theta-\Delta \theta=\theta^p,\ \ \ (x,t)\in \R^n\times [0,T),
\]
where $\theta=\theta(x,t)\ge 0$ is a positive solution on $\R^n\times [0,T)$. See \cite{LTW17, IkeSoba, ISWk19, TL1, TL2} for related works with general positive $\mu$.

We turn back to the original problem (\ref{IVP}).
There is the only one previous study \cite{ITW} about existence of global solutions to the problem (\ref{IVP}) as the authors know. In Theorem 2.1 in \cite{ITW}, a small data blow-up result and the sharp upper estimate of lifespan to the problem (\ref{IVP}) with a specific constant $\mu=2$ and data $(v_0,v_1)$ satisfying the slowly decaying condition (\ref{slow-decay-2}) with the scaling supercritical exponent $\nu<\frac{n-\gamma}{2}$ in multi-dimensional case $n\in \N$. However compactly supported functions $v_0$ and $v_1$ on $\R^n$, which are considered in the present paper, do not satisfy the slowly decaying condition (\ref{slow-decay-2}).

At the end of the introduction, we mention remarkable points of our results to the problem (\ref{IVP}) with $\mu=2$ in the present paper. We prove large data local existence and uniqueness (Theorem \ref{lwp}) in the wider range $\gamma\in \left(-\frac{1}{2},3\right)$, lower estimate of lifespan (Theorem \ref{lower}) in the wider range $\gamma\in \left(-\frac{1}{2}, 0\right)$ and unique global existence (Theorem \ref{T.1-1}) in the wider range $\gamma\in (0,3]$ than without the dissipative term $\frac{2}{1+t}\partial_tv$. We also prove a small data blow-up result and the almost sharp upper estimate of lifespan (Theorem \ref{T.1}) in the subcritical case $\gamma\in (-\frac{1}{2},0)$ for compactly supported data, which do not follow directly from the previous results (Theorem 2.1 in \cite{ITW} and Theorem 6.1 in \cite{H20}).

\section{Main Results}
\ \ In this section, we state our main results in the present paper. In the following, we always assume that
\begin{equation}
\label{coeffi}
     \mu=2.
\end{equation}
Then in the same manner as the study of the power nonlinearity (\ref{IVP-scale}), by using the Liouville transform $v\mapsto u$ given by (\ref{Liou}) with $\mu=2$, where $v=v(x,t)$ is a solution to the Cauchy problem (\ref{IVP}) on $\R^3\times [0,T)$, the transformed function $u$ satisfies the following equations:
\EQS{\label{IVP-2}
  \begin{cases}
    \dt^2 u-\Delta u=\displaystyle{\frac{(V_\ga*u^2)u}{(1+t)^2}}, &(x,t)\in\R^3\times [0,T),\\
    u(x,0)=v_0(x), &x\in\R^3,\\
    \dt u(x,0)=v_0(x)+v_1(x), &x\in\R^3.
  \end{cases}
}
Since the original problem (\ref{IVP}) with $\mu=2$ is equivalent to that of (\ref{IVP-2}), we consider the transformed problem (\ref{IVP-2}) in the following.

The integral equation on $\R^3\times [0,T)$ associated with the Cauchy problem (\ref{IVP-2}) is
\begin{equation}
\label{IE_u_depend}
u(x,t)=u^0(x,t)+L\left((V_{\gamma}*u^2)u\right)(x,t),
\end{equation}
where the function $u^0:\R^3\times \R\rightarrow \R$ associated with the initial condition of (\ref{IVP-2}) is defined by
\begin{equation}
\label{u^0}
u^0(x,t):=\p_t W\left(v_0|x,t\right)+W\left(v_0+v_1|x,t\right),
\end{equation}
and the integral operator $L$ on $C(\R^3\times [0,T))$ is defined by
\begin{equation}
\label{L}
L(F)(x,t):=
\int_0^tW\left(\frac{F(\cdot,s)}{(1+s)^2}
\middle|x,t-s\right)ds,
\end{equation}
where $F\in C(\R^3\times[0,T))$. Here $W$ is the solution operator to the free wave equation, which is defined by
\begin{equation}
\label{solop}
W(\phi|x,t):=
\frac{|t|}{4\pi}\int_{|\omega|=1}\phi(x+|t|\omega)dS_{\omega},
\end{equation}
for $\phi\in C(\R^3)$, where $(x,t)\in \R^3\times \R$ and $dS_{\omega}$ denotes the area element of the two dimensional unit sphere $S_2:=\left\{\omega\in \R^3\ :\ |\omega|=1\right\}$ in $\R^3$.

\begin{rem}
From the representation (\ref{u^0}) and (\ref{solop}), we see that if the initial data $(v_0,v_1)\in C_0^2(\R^3)\times C_0^1(\R^3)$ has a compact support, that is,
\[
      \supp(v_0,v_1)\subset \left\{x\in \R^3\ :\ |x|\le R\right\},
\]
for some $R>0$, then the support of $u^0$ satisfies
\begin{equation}
\label{supportc}
{\rm supp}\ u^0\subset A(T,R):=\{(x,t)\in\R^3\times[0,\infty)\ :\ t-R\le |x|\le t+R\}.
\end{equation}
\end{rem}

Next we give the definition of solution to the Cauchy problem (\ref{IVP-2}) and its lifespan $T(\varepsilon)$ for data $(\varepsilon v_0,\varepsilon v_1)$ with a small parameter $\varepsilon>0$:
\begin{definition}[Solution, Lifespan]
\label{def1}
\begin{itemize}
\item (Solution): Let $T>0$ and $(v_0,v_1)\in C^{2}(\R^3)\times C^{1}(\R^3)$. We say that the function $u:\R^n\times [0,T)\rightarrow \R$ is a solution to the Cauchy problem (\ref{IVP-2}) if $u$ belongs to the class $C(\R^3\times [0,T))$ and it satisfies the integral equation (\ref{IE_u_depend}).
\item (Lifespan): We call the maximal existence time $T=T(v_0,v_1)$ to be lifespan. For initial data $(\varepsilon v_0,\varepsilon v_1)$ with $\varepsilon>0$, the lifespan $T=T(\varepsilon v_0,\varepsilon v_1)$ is denoted by $T(\varepsilon)$, namely
\begin{align*}
    T(\varepsilon)
    &:=\sup\left\{T\in (0,\infty]:\right.\\
    &\left.\text{there exists a unique solution $u$ to (\ref{IVP-2}) with $(\varepsilon v_0,\varepsilon v_1)$ on $\R^n\times [0,T)$}\right\}.
\end{align*}
\end{itemize}
\end{definition}

\subsection{Existence and Uniqueness results}

\ \ In this subsection, we state our existence and uniqueness results to (\ref{IVP-2}). The following proposition means large data local existence to the problem (\ref{IE_u_depend}) with $\gamma\in \left(-\frac{1}{2},3\right)$ and data $(v_0,v_1)\in C^2_0(\R^3)\times C^1_0(\R^3)$ with a compact support. For a Banach space $(\mathcal{X},\|\cdot\|_{\mathcal{X}})$ and $r>0$, we denote by $B_r(\mathcal{X})$ a closed ball in $\mathcal{X}$ with a radius $r$ centered at the origin, that is
\[
     B_r(\mathcal{X}):=\left\{(v_0,v_1)\in \mathcal{X}\ :\ \|(v_0,v_1)\|_{\mathcal{X}}\le r\right\}.
\]
\begin{thm}[Local existence, Uniquenss]
\label{lwp}
Let $\gamma\in \left(-\frac{1}{2},3\right)$, $R\ge 1$ and $(v_0,v_1)\in C_0^2(\R^3)\times C_0^1(\R^3)$ with
\begin{equation}
\label{supp}
{\rm supp}\ (v_0,v_1) \subset\left\{x\in\R^3\ :\ |x|\le R\right\}.
\end{equation}
Then
%the Cauchy problem (\ref{IVP-2}) is locally well-posed in $C^2(\R^3)$. More precisely,
the following statements hold:
\begin{itemize}
\item (Existence): For any $r>0$, there exists a positive constant $T=T(r)>0$ such that for any initial data $(v_0,v_1)\in B_r(C^2(\R^3)\times C^1(\R^3))$ such that there exists a solution $u\in X(T,M)\subset C(\R^3\times[0,T))$ to the problem (\ref{IVP-2}) such that the solution $u$ has the finite propagation speed:
\begin{equation}
\label{finitp}
   {\rm supp}\ u\subset P(T,R):=\left\{(x,t)\in \R^3\times[0,T)\ :\ |x|\le t+R\right\}.
\end{equation}
%and satisfies the Huygens's principle:
%\[
%   {\rm supp}\ u\subset Q(T,R):= \left\{(x,t)\in \R^3\times[0,T)\ :\ t-R\le |x|\right\}.
%\]
Here $M=M(r)$ and $X(T,M)$ are defined by (\ref{closedb}).
\item (Uniqueness): Let $u\in C(\R^3\times [0,T))$ be the solution to (\ref{IVP-2}) obtained in the Existence part. Let $T_1\in (0,T(r)]$ and $w\in C(\R^3\times [0,T_1))$ be another solution to (\ref{IVP-2}). If the identity
\[
(w(x,0),\partial_tw(x,0))=(u(x,0),\partial_tu(x,0))
\]
holds for any $x\in \R^3$, then the identity $w(t,x)=u(t,x)$ holds for any $(x,t)\in \R^3\times [0,T_1)$.
\item (Continuity of the flow map): The flow map
\[
\Omega:B_r(C^2(\R^3)\times C^1(\R^3))\mapsto X(T,M),\ (u(x,0),\partial_tu(x,0))\mapsto u(t,x)
\]
is Lipshitz continuous.
%where $X(T)$ is the solution space given by (\ref{sols}).
%\end{itemize}
%Moreover the following statements also hold:
%\begin{itemize}
%\item (Blow-up alternative): If the lifespan $T(v_0,v_1)<\infty$, then
%\[
%     \lim_{t\rightarrow T(v_0,v_1)-0}\sup_{x\in \R^3}\sum_{|\alpha|\le 2}|\partial_x^{\alpha} u(x,t)|=\infty.
%\]

\item (Positivity): If for any $x\in \R^3$, the estimates $v_0(x)=0$ and $v_1(x)\ge 0$ hold, then the inequality $u(t,x)\ge 0$ holds for any $(x,t)\in \R^3\times [0,T)$.
\end{itemize}
\end{thm}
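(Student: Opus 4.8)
The plan is to prove Theorem \ref{lwp} by a standard contraction-mapping argument applied to the integral equation (\ref{IE_u_depend}), carried out in a suitable weighted space $X(T,M)$ of continuous functions supported in the forward cone $P(T,R)$. First I would fix the norm: since the data have compact support and $R \ge 1$, on the cone one has $\langle x \rangle \lesssim \langle t \rangle \lesssim 1+t$, so a natural choice is $\|u\|_{X(T,M)} = \sup_{(x,t)} w(x,t)^{-1}|u(x,t)|$ for an appropriate weight $w$ (e.g. a power of $\langle t+|x|\rangle \langle t-|x|\rangle$ adapted to the pointwise decay of $u^0$ and to the convolution potential $V_\gamma$), and $X(T,M)$ the closed ball of radius $M$ in that space intersected with $\{u : \supp u \subset P(T,R),\ u \in C\}$. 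The finite propagation speed (\ref{finitp}) is automatic from the support property (\ref{supportc}) of $u^0$ and the fact that $L$ preserves the cone: $W(\phi|x,t-s)$ only sees $\phi$ on the sphere of radius $t-s$ about $x$, so if $F$ is supported in $P(s,R)$ for each $s$, then $L(F)$ is supported in $P(t,R)$.

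The core estimates are two. First, a linear (pointwise) estimate for the free operator $W$ and for $L$: one needs $\|u^0\|_{X(T,M)} \lesssim \|(v_0,v_1)\|_{C^2\times C^1}$, which follows from the classical representation (\ref{solop}) together with the support bound, and a bound of the form $\|L(F)\|_{X(T,M)} \lesssim c(T)\,\|F\|$ in the appropriate norm on $F$, where the time factor $(1+s)^{-2}$ in (\ref{L}) gives extra room (and, crucially, $c(T) \to 0$ as $T \to 0$, so small $T$ buys the contraction for arbitrary data radius $r$). Second, a nonlinear estimate: one must control $\|(V_\gamma * u^2)u\|$ in terms of $\|u\|_{X(T,M)}^3$. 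This is where the condition $\gamma \in \left(-\tfrac12,3\right)$ enters — the convolution $V_\gamma * u^2$ must be shown to be finite and to satisfy the right pointwise/weighted bound. For $\gamma \in (0,3)$ the kernel $|x|^{-\gamma}$ is locally integrable and decays at infinity; for $\gamma \le 0$ it grows, and here the compact support of the data (hence of $u$ in the spatial slices, or at least the cone confinement $|x| \le t+R$) is what keeps $V_\gamma * u^2$ under control, at the cost of a loss of powers of $1+t$ — exactly the "weaker estimate" the introduction refers to, which is absorbed here only locally in time. I would prove this via a Young/Hölder-type splitting of the convolution integral into a region near the singularity and a far region, using the weight structure of $X$.

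Given these two estimates, the map $\Phi(u) := u^0 + L\big((V_\gamma * u^2)u\big)$ sends $X(T,M)$ into itself for $M = M(r)$ chosen comparable to $\|u^0\| \lesssim r$ and $T = T(r)$ small enough that $c(T) M^3 \le M/2$; and a parallel difference estimate $\|\Phi(u) - \Phi(\tilde u)\| \lesssim c(T)\,(M^2)\,\|u - \tilde u\|$ (using the algebraic identity $(V_\gamma*u^2)u - (V_\gamma*\tilde u^2)\tilde u = (V_\gamma*(u^2-\tilde u^2))u + (V_\gamma*\tilde u^2)(u-\tilde u)$ and the same convolution bound) makes $\Phi$ a contraction for $T$ small. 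This yields existence, and uniqueness \emph{within} $X(T,M)$; the stated unconditional uniqueness among all $w \in C(\R^3\times[0,T_1))$ follows by a continuation/continuity argument — $w$ also has finite propagation speed by the same cone argument, and a Gronwall-type estimate on $\|w-u\|$ on a short time interval, bootstrapped, forces $w=u$ on $[0,T_1)$. Continuity of the flow map is the same difference estimate read with two different data. Positivity follows because $W(\phi|x,t) \ge 0$ whenever $\phi \ge 0$ (it is an average of $\phi$ over a sphere, times $|t| \ge 0$), so with $v_0 \equiv 0$ one has $u^0 = W(v_1|\cdot) \ge 0$, the operator $L$ is positivity-preserving (the integrand $(1+s)^{-2}W(\cdots) \ge 0$ when its argument is $\ge 0$), and the convolution $V_\gamma * u^2 \ge 0$; hence the iteration $u_{k+1} = \Phi(u_k)$ started from $u_0 = u^0 \ge 0$ stays nonnegative and converges to $u$, giving $u \ge 0$.

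The main obstacle is the nonlinear convolution estimate for $\gamma$ close to $-\tfrac12$ (and, at the other end, $\gamma$ close to $3$): one must choose the weight in $X(T,M)$ so that $V_\gamma * u^2$ is both finite and controlled with a loss of powers of $1+t$ that is harmless on a bounded time interval, handling simultaneously the local singularity of $|x|^{-\gamma}$ (relevant as $\gamma \uparrow 3$) and its growth at spatial infinity (relevant as $\gamma \downarrow -\tfrac12$, where the cone bound $|x| \le t+R$ is essential). Everything else is routine once the function space and these two estimates are fixed; I would expect the bulk of the written proof to be the bookkeeping of exponents in the convolution splitting.
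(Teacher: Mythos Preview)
Your plan is correct and matches the paper's strategy: a contraction-mapping argument for $\Gamma[u]=u^0+L((V_\gamma*u^2)u)$ on a weighted closed ball $X(T,M)$, with the smallness coming from a factor $T^2$ (the paper combines the crude $L^\infty$--$L^\infty$ bound $|W(\phi|x,t)|\le \tfrac{t}{2}\|\phi\|_\infty$ with the pointwise convolution estimate Theorem~\ref{lem:potential}), and the same difference estimate yielding uniqueness and Lipschitz continuity; positivity via nonnegative iterates is exactly right (the paper does not even write this part out). The one technical difference is that where you propose a Young/H\"older splitting for $V_\gamma*u^2$, the paper instead exploits the radial symmetry of $|x|^{-\gamma}$ through the spherical-means identity (Lemma~\ref{lm:Planewave}) to reduce the convolution to a one-dimensional $\lambda$-integral, which makes the exponent bookkeeping for the full range $\gamma\in(-\tfrac12,3)$ cleaner than a generic splitting would.
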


\begin{rem}
%\item The support of the solution $u$ obtained in above satisfies
%\begin{equation}
%\label{suppor}
%   {\rm supp}\ u\subset A(T,R).
%\end{equation}
\begin{enumerate}
\item The Existence and Uniqueness parts above imply that the lifespan is positive, that is, $T(v_0,v_1)>0$.
\item We can prove that the local solution $u$ to (\ref{IE_u_depend}) obtained above belongs to $C^2(\R^3\times[0,T))$ and becomes a classical solution to (\ref{IVP-2}).
\end{enumerate}
%\end{enumerate}
\end{rem}

The next theorem means a lower estimate of lifespan for small solutions to the problem (\ref{IVP-2}) in the critical or subcritical case $\gamma\in \left(-\frac{1}{2},0\right]$:
\begin{thm}[Lower estimate of the lifespan]
\label{lower}
Besides the assumptions in Theorem \ref{lwp}, we assume $\gamma\in \left(-\frac{1}{2},0\right]$. Then there exist positive constants $\e_0=\e_0(\gamma,R,r)>0$ and $A=A(\gamma,R,r)>0$ such that for any $\varepsilon \in [0,\varepsilon_0]$, the lifespan $T(\varepsilon)$ given in Definition \ref{def1} satisfies the following estimate:
\begin{equation}
T(\varepsilon)\ge \left\{\label{lifespan}
\begin{array}{llll}
A\e ^{\frac{2}{\gamma}}, \ &\mbox{if}& \ \gamma\in \left(-\frac{1}{2},0\right),\\
\exp\left(A\e^{-2}\right), &\mbox{if}& \ \gamma=0.
\end{array}
\right.
\end{equation}
\end{thm}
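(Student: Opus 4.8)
The plan is to prove Theorem \ref{lower} by a standard contraction-mapping (iteration) argument in a weighted space adapted to the wave operator in three dimensions, in the spirit of Tsutaya \cite{T03}, but carried out on the transformed equation \eqref{IVP-2} so as to exploit the extra decaying factor $(1+t)^{-2}$ appearing in the operator $L$ defined by \eqref{L}. Concretely, I would work with a weight of the form $w(x,t) := (1+t+|x|)^{a}(1+|t-|x||)^{b}$ (the usual wave-type weight adapted to $n=3$), choose the exponents $a,b$ so that the free part $u^{0}$ from \eqref{u^0}, which is supported in the light-cone region $A(T,R)$ by \eqref{supportc} and whose amplitude is $O(\varepsilon)$, satisfies $|u^{0}(x,t)|\le C\varepsilon\, w(x,t)^{-1}$ uniformly on $P(T,R)$, and define the iteration space
\EQ{
X(T) := \Bigl\{ u\in C(P(T,R)) : \|u\|_{X(T)} := \sup_{(x,t)\in P(T,R)} w(x,t)\,|u(x,t)| < \infty \Bigr\},
}
with the natural closed ball of radius $2C\varepsilon$.

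The key step is the nonlinear a priori estimate. Here I would invoke Proposition \ref{lm:apriori1} together with Theorem \ref{lem:potential} (the potential/convolution estimate announced in the excerpt) to control $\|(V_{\gamma}*u^{2})u\|$ in terms of $\|u\|_{X(T)}^{3}$: the convolution with $V_{\gamma}=|x|^{-\gamma}$ for $\gamma\le 0$ is a mildly growing operator, and its effect on the pointwise weighted bound costs a factor growing no faster than a power of $(1+t+|x|)$; this is exactly the ``weaker estimate under the weaker condition'' referred to in the introduction. Feeding this into $L$ and using the classical pointwise estimate for the inhomogeneous wave equation in $\R^{3}$ (Huygens representation \eqref{solop} integrated against the weight), the crucial gain is the factor $(1+s)^{-2}$ inside \eqref{L}, which converts the would-be divergent time integral into a convergent or only slowly divergent one. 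Tracking the constants, one finds
\EQ{
\|L\bigl((V_{\gamma}*u^{2})u\bigr)\|_{X(T)} \le C' \Phi_{\gamma}(T)\,\|u\|_{X(T)}^{3},
}
where $\Phi_{\gamma}(T)$ is bounded by a constant times $T^{-\gamma}$ if $\gamma\in(-\tfrac12,0)$ and by a constant times $\log(1+T)$ if $\gamma=0$. The contraction estimate for the difference of two iterates is identical in structure, with $\|u\|_{X(T)}^{3}$ replaced by $(\|u\|^{2}+\|v\|^{2})\|u-v\|$.

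Closing the argument: the map is a contraction on the ball of radius $2C\varepsilon$ as soon as $C'\Phi_{\gamma}(T)(2C\varepsilon)^{2}\le \tfrac12$, i.e. as soon as $\Phi_{\gamma}(T)\lesssim \varepsilon^{-2}$. For $\gamma\in(-\tfrac12,0)$ this reads $T^{-\gamma}\lesssim\varepsilon^{-2}$, hence $T\le A\varepsilon^{2/\gamma}$ is admissible (note $2/\gamma<0$, so this is a large time); for $\gamma=0$ it reads $\log(1+T)\lesssim\varepsilon^{-2}$, hence $T\le \exp(A\varepsilon^{-2})$ is admissible. Since the fixed point lies in $X(T)$ and one checks, as in the Remark after Theorem \ref{lwp}, that it is a genuine $C^{2}$ solution with the finite propagation property \eqref{finitp}, this shows $T(\varepsilon)$ is at least as large as the stated bounds, which is precisely \eqref{lifespan}.

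I expect the main obstacle to be the bookkeeping in the nonlinear estimate: getting the sharp power of $(1+t+|x|)$ out of the convolution $V_{\gamma}*u^{2}$ when $\gamma$ is negative (so that $V_{\gamma}$ itself grows at spatial infinity) requires splitting the convolution integral according to whether the variable is near the light cone, inside it, or in the far field, and each region must be matched against the support information \eqref{finitp}–\eqref{supportc} and the weight; the delicate point is ensuring that the growth produced by $\gamma<0$ is strictly slower than the $(1+s)^{-2}$ decay can absorb over the time interval $[0,T]$, which is exactly what forces the threshold $\gamma>-\tfrac12$ and pins down the exponent $2/\gamma$ in the lifespan. The borderline case $\gamma=0$ then falls out as the logarithmic limit of this computation.
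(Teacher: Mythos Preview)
Your proposal is correct and follows essentially the same route as the paper: the weighted space you describe is exactly the paper's $X(T)$ (with $a=1$, $b=\gamma+1$ for $\gamma\in(-\tfrac12,2)$), and your key nonlinear estimate with $\Phi_{\gamma}(T)$ is precisely Proposition~\ref{lm:apriori1} with the paper's $D_{\gamma}(T)$ defined in \eqref{D}. The only cosmetic difference is that the paper closes the argument by a continuity/bootstrap on the quantity $H(t)=\|u\|_{X(t)}$ for the already-constructed local solution (showing that if $H(T^*)\le\mathcal{M}\varepsilon$ fails to persist then $D_{\gamma}(T^*)\gtrsim\varepsilon^{-2}$), whereas you run a direct contraction on a ball in $X(T)$; both reduce to the identical smallness condition $D_{\gamma}(T)\varepsilon^{2}\lesssim 1$ and yield the same bounds \eqref{lifespan}.
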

The optimality of the lower estimate is discussed in Remark \ref{rem2.1} and Corollary \ref{opti} below.

The following theorem means small data global existence to the problem (\ref{IE_u_depend}) and asymptotic behavior of solution as $t\rightarrow\infty$ in the supercritical case $\gamma\in (-0,3)$:

\begin{thm}[Global well-posedness, Dissipation]
\label{T.1-1}
Besides the assumptions in Theorem \ref{lwp}, we assume $\gamma\in (0,3)$. Then there exists a positive constant $\epsilon=\epsilon(\gamma,R)>0$ such that for any initial data $(v_0,v_1)\in B_{\epsilon}(C^2(\R^3)\times C^1(\R^3))$, the lifespan is infinity, that is, $T(v_0,v_1)=\infty$. Moreover, the transformed gobal solution $u\in C(\R^3\times[0,\infty))$ scatters as $t\rightarrow\infty$, that is, there exists a solution $u^+=u^+(x,t)\in C(\R^3\times [0,\infty))$ to the free wave equation $\partial_t^2u^+-\Delta u^+=0$ such that the identity holds:
\begin{align}\label{2-11-2}
    \lim_{t\rightarrow\infty}\sup_{x\in \R^3}(1+t+|x|)\left|u(x,t)-u^+(x,t)\right|=0.
\end{align}
%where $\nabla_{t,x}:=(\partial_t,\nabla_x)$ and $\alpha\in (\N\cup\{0\})^3$ is a multi-index.
Furthermore, the original solution $v=\frac{1}{1+t}u$ to the problem (\ref{IVP}) with $\mu=2$ has a dissipative structure, that is, the estimate
\begin{equation}
\label{2-12-2}
     \sup_{x\in \R^3}(1+t+|x|)|v(t,x)|\le C_*\epsilon(1+t)^{-1}.
\end{equation}
holds for any $t\ge 0$, where $C_*=C_*(\gamma,R,r)$ is some positive constant.
\end{thm}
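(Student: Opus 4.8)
\textbf{Proof proposal for Theorem \ref{T.1-1}.}

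The plan is to run a contraction/fixed-point argument for the integral equation \eqref{IE_u_depend} in a weighted space adapted to the supercritical decay $\gamma\in(0,3)$, and then read off the scattering and dissipation statements from the resulting a priori bound. First I would fix the metric space: for $(x,t)\in P(T,R)$ measure $u$ by a weight of the form $(1+t+|x|)^{-1}\LR{t-|x|}^{-\kappa}$ (with $\kappa=\kappa(\gamma)>0$ a small exponent chosen below), i.e. set
\[
\|u\|_{Y}:=\sup_{(x,t)\in P(T,R)}(1+t+|x|)\LR{t-|x|}^{\kappa}\,|u(x,t)|,
\]
and take the closed ball $\{\|u-u^0\|_Y\le \delta\}$ (or just $\|u\|_Y\le 2\|u^0\|_Y$). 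Since $(v_0,v_1)$ is compactly supported, the standard pointwise bound for the free evolution gives $\|u^0\|_Y\lesssim \e$, so it suffices to show $L((V_\gamma*u^2)u)$ maps the ball into itself and is a contraction. This is exactly where Proposition \ref{lm:apriori1} and Theorem \ref{lem:potential} (the ``weaker estimate under the weaker condition'' advertised in the introduction) enter: they are designed to bound the convolution potential $V_\gamma*u^2$ and then the linear operator $L$ applied to $(V_\gamma*u^2)u$, and the point of the $\frac{1}{(1+s)^2}$ factor in \eqref{L} is precisely to compensate the logarithmic or small-power loss those estimates leave. Concretely, I expect: $V_\gamma*u^2$ is controlled, via Theorem \ref{lem:potential}, by $\|u\|_Y^2$ times a weight like $(1+t+|x|)^{-(2-\max(\gamma,0))}\LR{t-|x|}^{\text{something}}$ on the support cone, and then $L$ of that times $u$, using the extra $(1+s)^{-2}$ decay, closes at the weight $(1+t+|x|)^{-1}\LR{t-|x|}^{-\kappa}$ provided $\gamma>0$ (the borderline $\gamma=0$ producing the logarithm that forces the finite lifespan in Theorem \ref{lower} rather than global existence).

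Once the ball is mapped to itself and the map is a contraction for $\e$ small enough \emph{uniformly in $T$}, the fixed point extends to $T=\infty$, giving $T(v_0,v_1)=\infty$ and a global solution $u$ with $\sup_{(x,t)}(1+t+|x|)\LR{t-|x|}^{\kappa}|u(x,t)|\lesssim\e$; in particular $(1+t+|x|)|u(x,t)|\lesssim\e$. For the scattering statement \eqref{2-11-2}, I would define the candidate asymptotic profile by
\[
u^+(x,t):=u^0(x,t)+\int_0^{\infty}W\!\left(\frac{(V_\gamma*u^2)(\cdot,s)u(\cdot,s)}{(1+s)^2}\,\middle|\,x,t-s\right)ds,
\]
which solves the free wave equation because each term does, and the improper integral converges in the weighted norm thanks to the same estimates (the $s$-integral of the potential-nonlinearity bound is finite for $\gamma>0$). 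Then
\[
(1+t+|x|)|u(x,t)-u^+(x,t)|
=(1+t+|x|)\left|\int_t^{\infty}W\!\left(\tfrac{(V_\gamma*u^2)u}{(1+s)^2}\,\middle|\,x,t-s\right)ds\right|,
\]
and the tail integral is bounded by a quantity tending to $0$ as $t\to\infty$ (a tail of a convergent integral, uniformly in $x$), which is \eqref{2-11-2}. Finally, the dissipative estimate \eqref{2-12-2} is immediate: $v=(1+t)^{-1}u$ by the Liouville transform \eqref{Liou} with $\mu=2$, so $\sup_x(1+t+|x|)|v(x,t)|=(1+t)^{-1}\sup_x(1+t+|x|)|u(x,t)|\le C_*\e(1+t)^{-1}$ from the global bound on $u$.

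The main obstacle, and the place where the ``effect of the dissipative term'' must be used, is the closing estimate for $L((V_\gamma*u^2)u)$ near the light cone $t\approx|x|$: the free-wave decay $W(\cdot|x,t)$ alone gives only $(1+t+|x|)^{-1}$ with no transverse weight, and the cubic convolution is long-range for $\gamma$ close to $0$, so without the $(1+s)^{-2}$ gain the $s$-integral would diverge (logarithmically at $\gamma=0$, polynomially for $\gamma\le 0$)—this is the quantitative content distinguishing Theorem \ref{T.1-1} from Theorem \ref{lower}. I would therefore spend the bulk of the work verifying that Theorem \ref{lem:potential}'s bound on $V_\gamma*u^2$, inserted into \eqref{L}, yields an integrand integrable in $s$ on $[0,t]$ with the right $(x,t)$-weight, carefully tracking the exponent $\kappa$ and the cases $\gamma\in(0,1)$, $\gamma=1$, $\gamma\in(1,3)$ separately since the behavior of $|x|^{-\gamma}*u^2$ changes across $\gamma=1$ (and near $\gamma=3$ one must also check the convolution is still well-defined against the $L^\infty_{\mathrm{loc}}$, compactly-in-the-cone decaying $u^2$).
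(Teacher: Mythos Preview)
Your overall strategy---contraction in a weighted $L^\infty$ space on the cone $P(T,R)$, invoking Theorem~\ref{lem:potential} and Proposition~\ref{lm:apriori1}, then reading off scattering from the tail of the Duhamel integral and dissipation from $v=(1+t)^{-1}u$---is exactly the paper's approach, and your identification of the mechanism (the factor $(1+s)^{-2}$ in \eqref{L} makes the $s$-integral converge precisely when $\gamma>0$, yielding a $T$-independent constant) is correct.

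Two points of imprecision would derail an actual execution, though. First, the transverse weight exponent $\kappa$ cannot be taken ``small'': the estimate of $V_\gamma*u^2$ in Theorem~\ref{lem:potential} requires integrating $\tau_-(\rho,t)^{-2\kappa}$ in $\rho$ over $[0,t+R]$, which forces $\kappa>1/2$ (this is the origin of the restriction $\gamma>-1/2$). The paper's space $X(T)$ uses the specific weight $N_\gamma(\tau_-)$ with exponent $\gamma+1$ for $\gamma<2$ and (essentially) $3$ for $\gamma\ge 2$; if you want to cite Proposition~\ref{lm:apriori1} and Theorem~\ref{lem:potential} rather than reprove them, you must work in that space. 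Second, the case split in the potential estimate occurs at $\gamma=2$, not $\gamma=1$: the transition is governed by the sign of $1-\gamma$ in $\int\lambda^{1-\gamma}d\lambda$ after the spherical-means identity (Lemma~\ref{lm:Planewave}), and for $\gamma\in(2,3)$ one must also excise a small ball around $x$ to handle the local singularity of $|x|^{-\gamma}$. Once you use the paper's $X(T)$, Proposition~\ref{lm:apriori1} hands you $\|L((V_\gamma*u^2)u)\|_{X(T)}\le C_2\gamma^{-1}R^{5-\gamma}\|u\|_{X(T)}^3$ directly, and the ``bulk of the work'' you anticipate is already done; the proof of Theorem~\ref{T.1-1} is then a few lines.
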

\begin{rem}
\label{roledis}
From Theorems \ref{lwp}, \ref{lower} and \ref{T.1-1}, we see that large data local existence $(\gamma\in \left(-\frac{1}{2},3\right))$, lower estimate of the lifespan $(\gamma\in \left(-\frac{1}{2},0\right])$ and small data global existence $(\gamma\in (0,3))$ are valid in the wider range $\gamma$ than the case without the damping $\frac{2}{1+t}\partial_t v$.
\end{rem}

\begin{rem}
\label{rem2.1}
From \cite[Theorem 2.1]{K04}, it can be expected that a small data global existence result to the problem (\ref{IVP-2}) holds in the critical case $\gamma=0$.
\end{rem}

\subsection{Blow-up result}

\ \ In this subsection, we state a blow-up result to the problem (\ref{IVP-2}). The following theorem means a small data blow-up result and the almost sharp upper estimate of the lifespan in the subcritical case $\ga<0$:

\begin{thm}[Small data blow-up and almost sharp upper estimate of the lifespan]
\label{T.1}
Besides the assumptions in Theorem \ref{lwp}, we assume $\gamma\in \left(-\frac{1}{2},0\right)$. Moreover we assume that $v_0\equiv 0$ and $v_1=v_1(x)$ is radially symmetric, $v_1\ge 0$ and $v_1\not\equiv 0$. Then the lifespan $T(v_0,v_1)$ is finite, that is, $T(v_0,v_1)<\infty$. Furthermore, 
for any $\delta>0$, there exist positive constants $\varepsilon_1=\varepsilon_1(\delta,\gamma,R,r)>0$ such that for any $\varepsilon\in (0,\varepsilon_1]$, the lifespan $T(\varepsilon)$ given in Definition \ref{def1} satisfies
\[
    T(\varepsilon)\le B \varepsilon ^{\frac{2}{\gamma}-\delta},
\]
where $B=B(\delta,\gamma,R,r)>0$ is a positive constant independent of $\varepsilon$.
\end{thm}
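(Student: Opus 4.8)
The plan is to combine the positivity and finite-propagation-speed properties from Theorem \ref{lwp} with a weighted-integral (Kato-type) functional argument, following but refining the scheme used for Theorem 6.1 in \cite{H20}. First I would introduce the spatial average $F_0(t):=\int_{\R^3}u(x,t)\,dx$ (or a weight-modified version of it). Since $v_0\equiv 0$, $v_1\ge 0$, $v_1\not\equiv 0$, the positivity part of Theorem \ref{lwp} gives $u\ge 0$ on $\R^3\times[0,T)$, and integrating the equation $\dt^2u-\Delta u=(1+t)^{-2}(V_\ga*u^2)u$ over $\R^3$ kills the Laplacian term, yielding $F_0''(t)=(1+t)^{-2}\int_{\R^3}(V_\ga*u^2)(x,t)\,u(x,t)\,dx\ge 0$. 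With $F_0(0)=0$ and $F_0'(0)=\int(v_0+v_1)=\varepsilon\int v_1>0$, this already shows $F_0$ is convex, increasing, and $F_0(t)\gtrsim \varepsilon t$ for $t\ge 1$.

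The core is then to bound the nonlinear integral from below in terms of $F_0(t)$ itself, producing a differential inequality that blows up in finite time. Here the support condition \eqref{finitp} is crucial: $u(\cdot,t)$ is supported in $|x|\le t+R$, so I can restrict integrations to a ball of radius $\sim t$. Using $\ga<0$ so that $V_\ga(x)=|x|^{-\ga}=|x|^{|\ga|}$ is \emph{increasing}, on the relevant region one has $V_\ga(x-y)\gtrsim (1+t)^{-\ga}$ for a suitable sub-region of $x,y$ (this is exactly where $\ga<0$ helps rather than hurts). A Cauchy--Schwarz/Hölder manipulation, together with the finite light-cone volume $|\{|x|\le t+R\}|\sim t^3$, then gives
\EQ{
 \int_{\R^3}(V_\ga*u^2)(x,t)\,u(x,t)\,dx \gtrsim (1+t)^{-\ga}\,\frac{\left(\int_{\R^3}u(x,t)\,dx\right)^3}{t^{\,a}}
}
for the appropriate power $a$ coming from the volume factors. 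Feeding this into the identity for $F_0''$ produces
\EQ{
 F_0''(t)\gtrsim (1+t)^{-2-\ga-a}\,F_0(t)^3,\qquad F_0(t)\gtrsim \varepsilon t\ \ (t\ge1).
}
A standard ODE blow-up lemma (multiply by $F_0'$, integrate, and compare; or the Kato lemma) then shows $T(\varepsilon)<\infty$, and tracking the $\varepsilon$-dependence through the lower bound $F_0(t)\gtrsim \varepsilon t$ yields $T(\varepsilon)\lesssim \varepsilon^{2/\ga-\delta}$ for any $\delta>0$ once $\varepsilon$ is small.

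The main obstacle — and the place where the refinement over \cite[Theorem 6.1]{H20} is needed — is getting the \emph{sharp} power of $(1+t)$ in the lower bound for the nonlinear term, i.e. minimizing the loss $a$ from the volume/Cauchy--Schwarz steps. A crude $L^1$--$L^\infty$ or single Cauchy--Schwarz bound loses too much in $t$ and gives a worse exponent than $2/\ga$; to reach $2/\ga-\delta$ one must exploit more carefully the concentration of $u$ near the light cone $|x|\sim t$ (where $u^0$ is supported), splitting the $x$- and $y$-integrals into the annular region $t-R\le|x|\le t+R$ versus the interior, and using that $V_\ga*u^2$ inherits a favorable lower bound there. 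The slack $\delta$ is precisely what one pays for not optimizing this completely. After that, the ODE argument and the bookkeeping of constants $\varepsilon_1(\delta,\ga,R,r)$ and $B(\delta,\ga,R,r)$ are routine.
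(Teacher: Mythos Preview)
Your overall architecture is right and matches the paper: define $F(t)=\int_{\R^3}u(x,t)\,dx$, use positivity and finite propagation to get $F''(t)=(1+t)^{-2}\int(V_\ga*u^2)u\,dx\ge 0$, $F(0)=0$, $F'(0)=\varepsilon\int v_1>0$, and aim for a Kato-type ODI. You also correctly derive the cubic inequality: since $\ga<0$ one has $|x-y|^{-\ga}\ge c(1+t)^{-\ga}$ on the support, and H\"older with the ball volume $\sim t^3$ gives
\[
  F''(t)\ \ge\ c\,(1+t)^{-(\ga+5)}F(t)^3.
\]
This is exactly the paper's inequality \eqref{frame-2}, with $a=3$ in your notation.

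The gap is in the next step. Feeding only the linear lower bound $F(t)\gtrsim \varepsilon t$ into \eqref{frame-2} and invoking Kato's lemma fails: with $p=3$, $q=\ga+5$, $a=1$ one gets $M=\frac{p-1}{2}a-\frac{q}{2}+1=-\frac{\ga+1}{2}<0$ for every $\ga\in(-\tfrac12,0)$, so the lemma gives no blow-up at all, let alone the exponent $2/\ga$. Your proposed fix --- sharpening the power of $(1+t)$ in the cubic ODI by restricting to the annulus $t-R\le|x|\le t+R$ --- does not work as stated, because only the \emph{free} part $u^0$ lives on that annulus; the full solution $u$ is supported in the whole ball $|x|\le t+R$, and the H\"older step cannot avoid the volume factor $t^3$.

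What the paper actually does is different and is the missing key idea. It first derives from the radial symmetry a \emph{linear}-in-$F$ inequality
\[
  F''(t)\ \ge\ \frac{2^{-\ga}F(t)}{(1+t)^{\ga+2}}\int_{\R^3}u^2(x,t)\,dx,
\]
and bounds $\|u(\cdot,t)\|_{L^2}$ from below by $c\varepsilon$ using the annular support of $u^0$ (this is where the light-cone concentration enters). This yields $F''(t)\ge c\,\varepsilon^2(1+t)^{-\ga-2}F(t)$, which after multiplying by $F'$ and integrating gives the exponential lower bound
\[
  F(t)\ \ge\ c\,\varepsilon\,\exp\bigl(c\,\varepsilon\,t^{-\ga/2}\bigr).
\]
Only then is the cubic inequality \eqref{frame-2} invoked: truncating the exponential series at order $j$ gives $F(t)\ge c_j\,\varepsilon^{1+j}t^{-\ga j/2}$, and applying the improved Kato lemma (Takamura \cite{T15}) with this polynomial lower bound produces $M=-\frac{\ga(j+1)+3}{2}>0$ for $j$ large and the lifespan exponent $\frac{2(j+1)}{\ga(j+1)+3}\to \frac{2}{\ga}$ as $j\to\infty$. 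The loss $\delta$ is precisely the price of choosing $j$ finite, not an artifact of a sub-optimal $t$-power in the ODI.
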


\begin{rem} From Theorem \ref{T.1-1} and Theorem \ref{T.1}, we see that the critical exponent $\gamma_c$ to the problem (\ref{IVP-2}) for compactly supported small data which divides global existence and blow-up is $0$, that is
\[
\gamma_c=0.
\]
\end{rem}

\begin{cor}[Almost optimality of the estimates of lifespan]
\label{opti}
We assume the same assumptions as Theorem \ref{T.1}. Then there exists $\varepsilon_3=\varepsilon_3(\delta,\gamma,R,r)>0$ such that for any $\varepsilon\in (0,\varepsilon_3]$, the lifespan $T(\varepsilon)$ given in Definition \ref{def1} satisfies
\[
     A\varepsilon^{\frac{2}{\gamma}}\le T(\varepsilon)\le B\varepsilon^{\frac{2}{\gamma}-\delta},
\]
where $\delta>0$ is an arbitrary positive number, $A$ and $B$ are positive constants given in Theorem \ref{lower} and Theorem \ref{T.1} respectively.
\end{cor}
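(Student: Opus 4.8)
The plan is to obtain this corollary as an immediate consequence of Theorems \ref{lower} and \ref{T.1}, so the argument amounts to checking that the hypotheses match up and choosing the threshold $\varepsilon_3$ appropriately. First I would observe that the hypotheses imposed in Theorem \ref{T.1} — namely $\gamma\in\left(-\frac12,0\right)$ together with those of Theorem \ref{lwp} (compact support of $(v_0,v_1)$, $R\ge 1$, etc.) and the additional conditions $v_0\equiv 0$, $v_1$ radially symmetric with $v_1\ge 0$ and $v_1\not\equiv 0$ — in particular imply the hypotheses of Theorem \ref{lower}, since $\left(-\frac12,0\right)\subset\left(-\frac12,0\right]$. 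Hence Theorem \ref{lower} supplies constants $\varepsilon_0=\varepsilon_0(\gamma,R,r)>0$ and $A=A(\gamma,R,r)>0$ with $T(\varepsilon)\ge A\varepsilon^{2/\gamma}$ for every $\varepsilon\in[0,\varepsilon_0]$.

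Next I would invoke Theorem \ref{T.1} with the prescribed $\delta>0$: it provides $\varepsilon_1=\varepsilon_1(\delta,\gamma,R,r)>0$ and $B=B(\delta,\gamma,R,r)>0$ such that $T(\varepsilon)\le B\varepsilon^{2/\gamma-\delta}$ for every $\varepsilon\in(0,\varepsilon_1]$. Setting $\varepsilon_3:=\min\{\varepsilon_0,\varepsilon_1\}$, which depends only on $\delta,\gamma,R,r$ as required, both inequalities hold simultaneously on $(0,\varepsilon_3]$, and this is precisely the asserted double estimate with the same constants $A$ and $B$. There is essentially no obstacle here: the only points to verify are the inclusion of the parameter ranges (immediate) and the admissible dependence of $\varepsilon_3$ (it is the minimum of two admissible thresholds). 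If desired, one may additionally record that the displayed interval is nonempty for small $\varepsilon$, since $\gamma<0$ forces $2/\gamma-\delta<2/\gamma<0$ and hence $(B/A)\varepsilon^{-\delta}\to\infty$ as $\varepsilon\to 0^+$, but this observation is not needed for the statement itself.
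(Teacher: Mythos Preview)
Your argument is correct and matches the paper's intent: the corollary is stated without an explicit proof precisely because it is an immediate combination of Theorems \ref{lower} and \ref{T.1}, and your choice $\varepsilon_3:=\min\{\varepsilon_0,\varepsilon_1\}$ is exactly the right way to make this explicit.
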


The rest of this paper is organized as follows.
In section \ref{S3}, we give a proof of the existence and uniqueness results (Theorems \ref{lwp}, \ref{lower}, \ref{T.1-1}). In subsection \ref{lemmas}, we collect several fundamental lemmas. In subsection \ref{solsp}, we introduce a solution space and a nonlinear mapping associated with the integral equation (\ref{IE_u_depend}) on the space. In subsection \ref{nonli}, we estimate the convolution term (Theorem \ref{lem:potential}) and the Duhamel term (Proposition \ref{lm:apriori1}). In subsection \ref{existe}, we completes the proof of Theorems \ref{lwp}, \ref{lower}, \ref{T.1-1}. In section 4, we give a proof of the blow-up result (Theorem \ref{T.1}).

\section{Proof of the existence results}
\label{S3}

In this section, we give a proof of the existence and uniqueness results (Theorems \ref{lwp}, \ref{lower}, \ref{T.1-1}).

\subsection{Useful lemmas}
\label{lemmas}
\ \ In this subsection, we prepare several lemmas to prove existence and uniqueness of solution to the problem (\ref{IVP-2}).

The following lemma means a  fundamental identity for spherical means:
    \begin{lem}
    \label{lm:Planewave}
    Let $b:(0,\infty)\rightarrow\R$ be a continuous function. Then for any $\rho>0$ and $x\in \R^3$ with $r=|x|$, the identity holds:
    \begin{equation}
    \label{Planewave}
    \begin{array}{ll}
    \d \int_{|\omega|=1}b(|x+\rho \omega|)dS_\omega
    \d = \frac{2\pi}{r\rho }\int_{|\rho-r|}^{\rho+r}\lambda b(\lambda)
    d\lambda.
    \end{array}
    \end{equation}
    \end{lem}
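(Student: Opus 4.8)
The plan is to reduce the spherical integral over the unit sphere in $\R^3$ to a one-dimensional integral by exploiting rotational symmetry, and then to perform the substitution $\lambda = |x+\rho\omega|$ (the distance from the origin to a point on the sphere of radius $\rho$ centered at $x$), whose Jacobian produces the factor $1/(r\rho)$.

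First I would set up the integral in spherical coordinates adapted to the direction of $x$. Since the integrand $b(|x+\rho\omega|)$ depends only on $|x+\rho\omega|$, and by the law of cosines $|x+\rho\omega|^2 = r^2 + \rho^2 + 2r\rho\cos\theta$ where $\theta$ is the angle between $\omega$ and $x/|x|$ (this is legitimate since $r=|x|>0$; the case $x=0$ is trivial or handled by continuity), the surface integral becomes
\EQQ{
\int_{|\omega|=1}b(|x+\rho\omega|)\,dS_\omega
= 2\pi\int_0^\pi b\!\left(\sqrt{r^2+\rho^2+2r\rho\cos\theta}\right)\sin\theta\,d\theta,
}
where the $2\pi$ comes from integrating out the azimuthal angle.

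Next I would substitute $\lambda = \sqrt{r^2+\rho^2+2r\rho\cos\theta}$, so that $\lambda^2 = r^2+\rho^2+2r\rho\cos\theta$ and hence $2\lambda\,d\lambda = -2r\rho\sin\theta\,d\theta$, i.e. $\sin\theta\,d\theta = -\frac{\lambda}{r\rho}\,d\lambda$. As $\theta$ runs from $0$ to $\pi$, $\cos\theta$ runs from $1$ to $-1$, so $\lambda$ runs from $\sqrt{(r+\rho)^2} = r+\rho$ down to $\sqrt{(r-\rho)^2} = |r-\rho| = |\rho - r|$. The sign flip in the Jacobian is absorbed by reversing the limits of integration, yielding
\EQQ{
2\pi\int_0^\pi b(\cdots)\sin\theta\,d\theta
= \frac{2\pi}{r\rho}\int_{|\rho-r|}^{\rho+r}\lambda\,b(\lambda)\,d\lambda,
}
which is exactly \eqref{Planewave}.

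There is no real obstacle here; the only points requiring mild care are that $r>0$ for the change of variables to be nonsingular (the stated hypothesis is $x\in\R^3$ with $r=|x|$, and one tacitly assumes $r>0$, or treats $x=0$ separately by direct computation, where both sides reduce to $4\pi b(\rho)$ after interpreting the limit), and that $b$ being merely continuous on $(0,\infty)$ suffices since the integrand is continuous and the interval $[|\rho-r|,\rho+r]$ is compact (with the endpoint $|\rho - r| = 0$ possible when $\rho = r$, still harmless). I would present it as a two-line change-of-variables computation.
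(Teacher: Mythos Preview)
Your proof is correct and is precisely the classical change-of-variables argument via the law of cosines; the paper does not give its own proof but refers to John's book \cite{J55} (and \cite[Lemma 2.1]{K04}), where the same computation appears. There is nothing to add.
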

For the proof of this lemma, see Chapter I in \cite{J55} (see also \cite[Lemma 2.1]{K04}).

%Next we state the well-known finite propagation property of solutions to the wave equation:
%\begin{lem}[Finite propagation property]
%\label{FP}
%Let $(v_0,v_1)\in C_0^2(\R^3)\times C_0^1(\R^3)$ satisfy the condition on the support (\ref{supp}), $T>0$, $G\in C(\R^3\times [0,T))$ be a function satisfying $\supp G\subset
%\left\{(x,t)\in \R^3\times [0,T)\ :\ |x|\le t\right\}$ and $u$ be a classical solution to the inhomogeneous wave equation $\partial_t^2u-\Delta u=G$ on $(x,t)\in \R^3\times [0,T)$ with the initial condition $v(x,0)=v_0(x)$ and $\partial_t v(x,0)=v_1(x)$. Then the relation
%\[
% \supp u\subset P(T,R)
%\]
%holds.
%\end{lem}
%For the proof of this lemma, see \cite[Theorem 2.2]{Sbook}
%
%In the three spatial dimension, the following "Huygens's principle" holds:
%\begin{lem}[Huygens principle]
%\label{Hu}
%Under the same assumptions of Lemma \ref{FP}, for any $(x,t)\in \R^3\times[0,T)$ with $|x|\le t-R$, the identity $u(t,x)=0$ holds.
%\end{lem}

The following lemma means the sharp space-time decay estimate in a point-wise sense for the solution to the free wave equation with compactly supported initial data:
\begin{lem}[Space-time decay estimate for free solution]
\label{lem:decay_est}
Let $(v_0,v_1)\in C^2_0(\R^3)\times C^1_0(\R^3)$ satisfy the condition on the support (\ref{supp}) and $u^0:\R^3\times \R\rightarrow \R$ be the solution to the free wave equation, which is expressed by (\ref{u^0}).
Then there exists a positive constant
$C_{0}=C_0(R)>0$ depending only on $R$ such that for any $(x,t)\in \R^3\times [0,\infty)$ with $t-R\le |x|\le t+R$, the estimate
\begin{equation}
\label{linear est}
(t+|x|+R)|u^0(x,t)|\\
\le\d C_0\sup_{x\in \R^3}\bigg\{\sum_{|\alpha|\le 2}\left|\partial_x^{\alpha}v_0(x)\right|+\sum_{|\beta|\le 1}\left|\partial_x^{\beta}v_1(x)\right|\bigg\}
\end{equation}
holds, where $\alpha\in (\N\cup\{0\})^3$ and $\beta\in (\N\cup\{0\})^3$ denote multi-indices.
\end{lem}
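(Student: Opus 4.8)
The plan is to use the explicit spherical-means representation of $u^0$ together with Lemma \ref{lm:Planewave}, and then estimate each term in the regions $|x| \le R$ (near the origin) and $|x| > R$ (in the wave zone) separately. Recall from \eqref{u^0} that $u^0 = \partial_t W(v_0|x,t) + W(v_0+v_1|x,t)$, so by \eqref{solop} we must control
\[
\frac{t}{4\pi}\int_{|\omega|=1}(v_0+v_1)(x+t\omega)\,dS_\omega
\quad\text{and}\quad
\partial_t\left(\frac{t}{4\pi}\int_{|\omega|=1}v_0(x+t\omega)\,dS_\omega\right).
\]
For the first term, applying Lemma \ref{lm:Planewave} with $b = v_0+v_1$ and $\rho = t$ converts the surface integral into $\frac{1}{2r}\int_{|t-r|}^{t+r}\lambda (v_0+v_1)(\lambda \cdot)\,d\lambda$ (with the obvious abuse of notation for radial profiles; in the non-radial case one keeps the surface integral but uses that the integrand is supported where $|x+t\omega|\le R$). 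The key geometric fact, which follows from $t-R\le |x|\le t+R$ and $\supp(v_0,v_1)\subset\{|y|\le R\}$, is that the portion of the sphere $\{x+t\omega : |\omega|=1\}$ meeting the support has surface measure $O(R^2/(t\cdot\max(|x|,1)))$ or so — more precisely the $\lambda$-integration range in Lemma \ref{lm:Planewave} intersected with $[0,R]$ has length at most $\min(2r, R - |t-r|_+ + \text{something}) \lesssim R$, and actually one gets an extra factor from $r$ in the wave zone.

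The key steps, in order: (1) For $|x|\le R$ (hence $t\le 2R$), the weight $t+|x|+R \le 4R$ is bounded, and $|u^0|$ is bounded by $C(R)$ times the sup-norm of the data and its derivatives by the trivial estimate on \eqref{solop} and its $t$-derivative (the $\partial_t$ hitting the measure produces a term with one derivative of $v_0$ on the sphere, handled by differentiating inside the integral after the change of variables); this gives \eqref{linear est} in this region. (2) For $|x| > R$, so $r = |x| > R \ge 1$ and $t \ge r - R > 0$: apply Lemma \ref{lm:Planewave}. The integration interval $[|t-r|, t+r]$ intersected with $\{\lambda\le R\}$ (where the data lives) has length $\lesssim R$ since $|t-r|\le R$ forces the left endpoint to be within $R$ of where the support ends. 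Thus $|W(v_0+v_1|x,t)| = \frac{t}{4\pi}\cdot\frac{2\pi}{rt}\left|\int \lambda(v_0+v_1)\,d\lambda\right| \lesssim \frac{1}{r}\cdot R\cdot R\cdot\|v_0+v_1\|_\infty \lesssim \frac{R^2}{r}\|(v_0,v_1)\|$. Since $t+|x|+R \lesssim r$ in this region (as $t \le r+R \le 2r$), multiplying by the weight gives the bound $C_0 R^2 \|\cdot\|$, which is \eqref{linear est}. (3) For the $\partial_t W(v_0|x,t)$ term, write $W(v_0|x,t) = \frac{1}{4\pi r}\int_{|t-r|}^{t+r}\lambda v_0(\lambda\cdot)\,d\lambda$ after Lemma \ref{lm:Planewave}, then differentiate in $t$: $\partial_t$ produces boundary terms $\frac{1}{4\pi r}[(t+r)v_0((t+r)\cdot) \pm (t-r)v_0(|t-r|\cdot)\,\mathrm{sgn}(t-r)]$. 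Each boundary term is supported where $t\pm r \le R$, i.e. in the same thin set, and is bounded by $\frac{1}{r}\cdot R\cdot\|v_0\|_\infty$; multiplying by $t+|x|+R\lesssim r$ again gives the claimed bound. In the non-radial case one instead differentiates the surface-integral form directly, using Kirchhoff's formula $\partial_t W(v_0|x,t) = \frac{1}{4\pi}\int_{|\omega|=1}\big(v_0(x+t\omega) + t\,\omega\cdot\nabla v_0(x+t\omega)\big)dS_\omega$, and estimates using that the effective surface measure is $\lesssim R^2/r$ (this is the content of Lemma \ref{lm:Planewave} with $b=\mathbf{1}_{[0,R]}$) times a factor $t$ from the $t|\nabla v_0|$ term, giving $\lesssim \frac{t R^2}{r t}\cdot\|\nabla v_0\| \cdot \frac{1}{r}$... one must be slightly careful here, see below.

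The main obstacle I expect is the bookkeeping in the $\partial_t W(v_0|\cdot)$ term in the wave zone: naively the Kirchhoff term $t\,\omega\cdot\nabla v_0(x+t\omega)$ carries a factor of $t\sim r$, which would cancel the $1/r$ decay we need. The resolution is that this factor $t$ is compensated by the $1/t$ coming from the effective surface-measure bound (the sphere of radius $t$ meets the ball $\{|y|\le R\}$ in a cap of measure $\lesssim R^2/(rt)\cdot t = $ ... precisely, Lemma \ref{lm:Planewave} with $b=\lambda\mapsto\mathbf{1}_{[0,R]}(\lambda)$ gives $\int_{|\omega|=1}\mathbf{1}_{[0,R]}(|x+t\omega|)dS_\omega \le \frac{2\pi}{rt}\int_0^R \lambda\,d\lambda = \frac{\pi R^2}{rt}$), so $t\int_{|\omega|=1}|\nabla v_0(x+t\omega)|dS_\omega \le t\cdot\frac{\pi R^2}{rt}\|\nabla v_0\|_\infty = \frac{\pi R^2}{r}\|\nabla v_0\|_\infty$; then the overall $\frac{1}{4\pi}$ and the weight $t+|x|+R\lesssim r$ yield a clean constant multiple of $\|\nabla v_0\|_\infty$, i.e. the second-derivative-free part suffices and one never actually needs $|\alpha|=2$ for this particular estimate — the second derivatives in \eqref{linear est} are there only to give a single clean statement (or would be needed if one also wanted an estimate on $\nabla u^0$). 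I would double-check that the boundary terms from differentiating the $\lambda$-integral (in the radial formulation) and the cap-measure estimate (in the general formulation) match up, and that the case $t-r < 0$, i.e. $r - R \le t < r$, where the lower limit $|t-r| = r-t$ is handled with the correct sign, is treated correctly; this is where one must be attentive, but it is routine once the geometric picture (support is a thin annular shell intersected with a ball, meeting each sphere in a small cap) is fixed.
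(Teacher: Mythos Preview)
The paper does not give its own proof of this lemma; it simply refers to \cite[Lemma~2.4]{AKT00} (and \cite{J79}). Your direct argument via Kirchhoff's formula together with the cap-measure bound
\[
\int_{|\omega|=1}\mathbf{1}_{[0,R]}(|x+t\omega|)\,dS_\omega
\;\le\;\frac{2\pi}{rt}\int_0^R \lambda\,d\lambda
\;=\;\frac{\pi R^2}{rt}
\]
obtained from Lemma~\ref{lm:Planewave} is correct and is essentially the standard route those references take: the sphere $\{x+t\omega\}$ meets $\{|y|\le R\}$ in a cap of area $O(R^2/(rt))$, which converts the factor $t$ in the Kirchhoff term $t\,\omega\cdot\nabla v_0$ into the desired $1/r$ decay, and the near-origin region $|x|\le R$, $t\le 2R$ is handled by the trivial bound. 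Your side remark is also right: for the pointwise estimate on $u^0$ alone one only needs $\sum_{|\alpha|\le 1}\|\partial_x^\alpha v_0\|_\infty + \|v_1\|_\infty$; the extra orders in \eqref{linear est} are a harmless overstatement (in the cited source the same norm is used because companion estimates on derivatives of $u^0$ are proved simultaneously).
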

For the proof of this lemma, see \cite[Lemma 2.4]{AKT00} (see also \cite{J79}).

The following lemma means elementary estimates for the solution operator $W$, which is given by (\ref{solop}):
\begin{lem}[Estimates of the solution operator $W$]
\label{lem4.2}
\begin{enumerate}
\item
  Let $\Phi \in C(\R^3)$ and $\phi \in C([0,\I))$.
  If the inequality $|\Phi(x)|\le \phi(|x|)$ holds for any $x\in\R^3$, then the estimate
  \EQS{
  |W(\Phi|x,t)|\le \frac{1}{2r}\int_{|r-t|}^{r+t}\la \phi(\la) d\la
  }
holds for any $(x,t)\in\R^3\times [0,\I)$ with $r=|x|$.
\item
  Let $T>0$, $\Psi\in C(\R^3\times [0,T))$ and $\psi\in C([0,\I)\times [0,T))$. We assume that the estimate $|\Psi(x,t)|\le \psi(|x|,t)$ holds for any $(x,t)\in \R^3\times [0,T)$.
  Then the estimate
  \begin{equation}
  \label{3-4-1}
    \left|\int_0^t W\left(\Psi(\cdot,s)|x,t-s\right)ds\right|
  \le \frac{1}{2r}\iint_{D(r,t)}\la \psi(\la,s)d\la ds,
  \end{equation}
  holds for any $(x,t)\in\R^3\times [0,T)$ with $r=|x|$, where $D(r,t)$ is defined by
  \EQS{
  D(r,t):=\left\{(\la,s)\in[0,\I)^2\ :\ s\in [0,t], |r-(t-s)|\le\la\le r+t-s\right\}.
  }
\end{enumerate}
\end{lem}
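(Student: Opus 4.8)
The plan is to reduce both statements to the fundamental identity for spherical means established in Lemma \ref{lm:Planewave} together with the definition \eqref{solop} of the solution operator $W$. For part (1), I would start from
\[
|W(\Phi|x,t)| \le \frac{t}{4\pi}\int_{|\omega|=1}|\Phi(x+t\omega)|\,dS_\omega
\le \frac{t}{4\pi}\int_{|\omega|=1}\phi(|x+t\omega|)\,dS_\omega,
\]
using the hypothesis $|\Phi(x)|\le \phi(|x|)$. The function $\phi$ is only given on $[0,\infty)$, so strictly one should apply Lemma \ref{lm:Planewave} with $b(\lambda)=\phi(\lambda)$ and $\rho=t$ (the case $t=0$ being trivial since then $W(\Phi|x,0)=0$). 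This gives
\[
\int_{|\omega|=1}\phi(|x+t\omega|)\,dS_\omega = \frac{2\pi}{rt}\int_{|t-r|}^{t+r}\lambda\,\phi(\lambda)\,d\lambda,
\]
and substituting back yields exactly $|W(\Phi|x,t)|\le \frac{1}{2r}\int_{|r-t|}^{r+t}\lambda\,\phi(\lambda)\,d\lambda$. I should note that $|r-t|=|t-r|$ so the two forms of the bound agree.

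For part (2), the idea is to apply part (1) pointwise in the time variable $s$. Fix $(x,t)\in\R^3\times[0,T)$ with $r=|x|$ and $s\in[0,t]$. Since $|\Psi(x,t)|\le\psi(|x|,t)$, the function $x\mapsto\Psi(x,s)$ is dominated by $|x|\mapsto\psi(|x|,s)$, so part (1) applied at time $t-s$ gives
\[
|W(\Psi(\cdot,s)|x,t-s)| \le \frac{1}{2r}\int_{|r-(t-s)|}^{r+(t-s)}\lambda\,\psi(\lambda,s)\,d\lambda.
\]
Then I would bound the modulus of the integral over $s$ by the integral of the modulus, integrate the displayed inequality in $s$ over $[0,t]$, and recognize the resulting iterated integral as the double integral over the region
\[
D(r,t)=\{(\lambda,s):s\in[0,t],\ |r-(t-s)|\le\lambda\le r+t-s\},
\]
which is precisely the claimed set; a short Fubini/Tonelli remark (the integrand $\lambda\,\psi(\lambda,s)\ge 0$ up to sign, and everything is continuous on the compact region) justifies writing it as $\iint_{D(r,t)}\lambda\,\psi(\lambda,s)\,d\lambda\,ds$.

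There is essentially no hard step here: the only mild care needed is (i) handling the degenerate case $t=0$ (equivalently $r=0$, where the estimates are read in the limiting sense or both sides are interpreted via the $\frac{1}{2r}\int$ factor as in the standard convention), and (ii) making sure that Lemma \ref{lm:Planewave} is invoked with the correct roles of $\rho$ and $r$ and that $\phi,\psi$ need only be defined on $[0,\infty)$. Measurability and finiteness are immediate from continuity of all data on the relevant compact sets, so Tonelli's theorem applies without difficulty.
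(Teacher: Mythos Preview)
Your plan is correct and is exactly the standard argument: apply the spherical-mean identity of Lemma~\ref{lm:Planewave} to the definition \eqref{solop} of $W$, then integrate in $s$ for part~(2). The paper does not give its own proof of Lemma~\ref{lem4.2} but simply refers to \cite[Lemma~2.2]{K04}, whose argument is precisely the one you outline.
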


For the proof of this lemma, see \cite[Lemma 2.2]{K04}.

The next corollary means a useful representation of the right hand side of the equation (\ref{3-4-1}) through changing variables with $\alpha=s+\lambda$ and $\beta=s-\lambda$, which is equivalent to $s=\frac{\alpha+\beta}{2}$ and $\lambda=\frac{\alpha-\beta}{2}$.

\begin{cor}[A representation of the integral on $D(r,t)$]
\label{repre}
We assume the same assumptions of 2 in Lemma \ref{lem4.2}. Let $(x,t)\in \R^3\times [0,T)$ with $|x|=r$. Then the estimates
  \begin{align}
    \left|\int_0^t W\left(\Psi(\cdot,s)|x,t-s\right)ds\right|
  &\le \frac{1}{2r}\iint_{D(r,t)}\la \psi(\la,s)d\la ds\notag\\
  &=\frac{1}{4r}\int_{\mathcal{D}(r,t)}\frac{\alpha-\beta}{2}\psi\left(\frac{\alpha-\beta}{2},\frac{\alpha+\beta}{2}\right)d\alpha d\beta
\label{3-7-2}
  \end{align}
hold, where $\mathcal{D}(r,t)$ is defined by (\ref{3-8-8}). Moreover, under the condition $r\le t+R$, the estimate
\begin{equation}
\label{3-8-2}
  \left|\int_0^t W\left(\Psi(\cdot,s)|x,t-s\right)ds\right|\le \frac{1}{4r}\int_{-R}^{t-r}\int_{|t-r|}^{t+r}\frac{\alpha-\beta}{2}\psi\left(\frac{\alpha-\beta}{2},\frac{\alpha+\beta}{2}\right)d\alpha d\beta
\end{equation}
holds.
\end{cor}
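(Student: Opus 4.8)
The first inequality in \eqref{3-7-2} is exactly the conclusion of part (2) of Lemma \ref{lem4.2} applied to $\Psi$ and $\psi$, so there is nothing to prove there; the substance of the corollary is the equality in \eqref{3-7-2} together with the bound \eqref{3-8-2}. Both follow from the change of variables announced just before the statement, $(\lambda,s)\mapsto(\alpha,\beta)=(s+\lambda,s-\lambda)$, whose inverse is $s=\frac{\alpha+\beta}{2}$, $\lambda=\frac{\alpha-\beta}{2}$. The plan is to first record that this linear map has Jacobian of absolute value $2$, so that $d\lambda\,ds=\frac{1}{2}\,d\alpha\,d\beta$; this turns $\frac{1}{2r}\iint_{D(r,t)}\lambda\,\psi(\lambda,s)\,d\lambda\,ds$ into $\frac{1}{4r}$ times the integral of $\frac{\alpha-\beta}{2}\,\psi\left(\frac{\alpha-\beta}{2},\frac{\alpha+\beta}{2}\right)$ over the image of $D(r,t)$.

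It then remains to identify that image, which is by definition the set $\mathcal D(r,t)$ of \eqref{3-8-8}. I would do this by translating the constraints defining $D(r,t)$ one at a time: $\lambda\ge 0$ becomes $\alpha\ge\beta$; $0\le s\le t$ becomes $0\le\alpha+\beta\le 2t$; $\lambda\le r+t-s$ becomes $\alpha\le r+t$; and $\lambda\ge|r-(t-s)|$, after writing $r-(t-s)=r-t+\frac{\alpha+\beta}{2}$ and unfolding the absolute value, becomes the pair $\beta\le t-r$, $\alpha\ge t-r$. This yields \eqref{3-8-8}, and hence the equality in \eqref{3-7-2}.

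For \eqref{3-8-2}, the key point is that the transformed integrand $\frac{\alpha-\beta}{2}\,\psi\left(\frac{\alpha-\beta}{2},\frac{\alpha+\beta}{2}\right)$ is nonnegative on $\mathcal D(r,t)$, since the factor $\frac{\alpha-\beta}{2}$ equals $\lambda\ge 0$ there and $\psi\ge 0$ dominates $|\Psi|$. Therefore enlarging the domain of integration can only increase the value, and it suffices to check the inclusion $\mathcal D(r,t)\subset\{(\alpha,\beta): -R\le\beta\le t-r,\ |t-r|\le\alpha\le t+r\}$. The bounds $\alpha\le t+r$ and $\beta\le t-r$ are immediate from the description of $\mathcal D(r,t)$; the bound $\alpha\ge|t-r|$ follows by combining $\alpha\ge t-r$ with $\alpha\ge-\beta\ge r-t$ (using $\alpha+\beta\ge 0$ and $\beta\le t-r$); and the remaining lower bound $\beta\ge -R$ is where the finite propagation speed of $\Psi$ enters: the support property $\supp\Psi(\cdot,s)\subset\{|x|\le s+R\}$ forces $\psi(\lambda,s)=0$ for $\lambda>s+R$, so that $\beta=s-\lambda\ge -R$ on the effective region. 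The resulting rectangle is nondegenerate precisely when $-R\le t-r$, that is, under the hypothesis $r\le t+R$.

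The argument is essentially bookkeeping — one change of variables followed by the enlargement of a polygonal region — and I do not expect a real obstacle. The only step that needs genuine care is the last one: the inequality $\beta\ge -R$ is not a consequence of the inequalities defining $D(r,t)$ alone (there $\beta$ can reach $-(r+t)$, attained at $s=0$, $\lambda=r+t$), and must instead be extracted from the support of the source term $\Psi$; this is also why the constant $R$, rather than only $r$ and $t$, appears in \eqref{3-8-2}.
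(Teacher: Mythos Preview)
Your approach is exactly that of the paper: compute the Jacobian $\frac12$, describe the image $\mathcal D(r,t)$ of $D(r,t)$ under $(\lambda,s)\mapsto(\alpha,\beta)$, and then enlarge to the rectangle $[|t-r|,t+r]\times[-R,t-r]$ using nonnegativity of the integrand.

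Where you are in fact more careful than the paper is the lower bound $\beta\ge -R$. The paper simply asserts the inclusion $\mathcal D(r,t)\subset[|t-r|,t+r]\times[-R,t-r]$ under $r\le t+R$, but as you observe, on $\mathcal D(r,t)$ the coordinate $\beta$ can reach $-(r+t)$, so this inclusion is not literally true from the hypotheses of Lemma~\ref{lem4.2}(2) alone; it genuinely uses the finite propagation speed of the source (as it is applied in Proposition~\ref{lm:apriori1}, where $\Psi$ is supported in $\{|x|\le s+R\}$). One small refinement: the support condition on $\Psi$ does not force the \emph{given} majorant $\psi$ to vanish for $\lambda>s+R$; rather, it allows one to replace $\psi$ by $\psi\cdot\mathbf 1_{\{\lambda\le s+R\}}$ without spoiling the inequality $|\Psi(x,s)|\le\psi(|x|,s)$. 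With that phrasing your argument is complete.
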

This corollary is proved as follows: The identity $d\lambda ds=\mathcal{J}d\alpha d\beta$ holds, where $\mathcal{J}$ is defined by
\[
   \mathcal{J}:=\left|\det
   \left(
\begin{array}{cc}
\frac{ds}{d\alpha} & \frac{ds}{d\beta} \\
\frac{d\lambda}{d\alpha} & \frac{d\lambda}{d\beta}
\end{array}
\right)
\right|
=\left|\det
   \left(
\begin{array}{cc}
\frac{1}{2} & \frac{1}{2} \\
\frac{1}{2} & -\frac{1}{2}
\end{array}
\right)
\right|=\frac{1}{2}.
\]
We remark that for any $(x,t)\in \R^3\times [0,T)$ with $|x|=r$, $(\lambda,s)\in D(r,t)$ is equivalent to $(\alpha,\beta)\in \mathcal{D}(r,t)$, where $\mathcal{D}(r,t)$ is defined by
\begin{equation}
\label{3-8-8}
  \mathcal{D}(r,t):=\begin{cases}
    \mathcal{D}_1(r,t)\cup \mathcal{D}_2(r,t), &\text{if}\ t\ge r,\\
    \left\{(\alpha,\beta)\ :\ -t-r\le \beta\le t-r,\ -\beta\le \alpha\le r+t \right\}, &\text{if}\ t<r,
  \end{cases}
\end{equation}
with
\begin{align*}
  \mathcal{D}_1(r,t):&=\left\{(\alpha,\beta)\ :\ r-t\le \beta\le t-r,\ t-r\le \alpha\le r+t\right\},\notag\\
  \mathcal{D}_2(r,t):&=\left\{(\alpha,\beta)\ :\ -r-t\le \beta\le t-r,\ -\beta\le \alpha\le r+t\right\}.\notag
\end{align*}
We also note that the inclusion $\mathcal{D}(r,t)\subset[|t-r|,t+r]\times[-R,t-r]$ holds under the condition $r\le t+R$.

We use a notation $\LR{t}:=1+|t|$ for $t\in\R$.

\begin{lem}\label{lem4.3}
  Let $\ka\in\R$. Then for any $(r,t)\in [0,\infty)^2$, the estimate
  \EQQS{
  \int_{|r-t|}^{r+t}(1+\lambda)^{-(\ka+1)} d\lambda
  \le
  \begin{cases}
    \d \frac{2\max(1,\kappa)}{\kappa}\cdot\frac{\min(r,t)}{\LR{t+r}\LR{t-r}^\ka}, &\quad\mbox{if}\quad \kappa>0,\\
    \d \log \frac{\langle t+r\rangle}{\langle t-r\rangle}, &\quad\mbox{if}\quad \kappa=0,\\
    \d \frac{2\max(1,-\kappa)}{-\kappa}\cdot\frac{\min(r,t)}{\langle t+r\rangle^{\ka+1}}, &\quad\mbox{if}\quad \kappa<0\\
  \end{cases}
  }
holds.
\end{lem}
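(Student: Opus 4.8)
The plan is to compute the elementary integral $\int_{|r-t|}^{r+t}(1+\lambda)^{-(\kappa+1)}\,d\lambda$ exactly and then bound the result by the stated expressions, treating the three cases $\kappa>0$, $\kappa=0$, $\kappa<0$ separately. Write $a:=|t-r|$ and $b:=t+r$, so $0\le a\le b$ and $b-a=2\min(r,t)$. For $\kappa\ne 0$ the antiderivative of $(1+\lambda)^{-(\kappa+1)}$ is $-\kappa^{-1}(1+\lambda)^{-\kappa}$, hence the integral equals $\kappa^{-1}\bigl(\langle a\rangle^{-\kappa}-\langle b\rangle^{-\kappa}\bigr)$ (using $\langle\lambda\rangle=1+\lambda$ for $\lambda\ge 0$); for $\kappa=0$ it equals $\log\langle b\rangle-\log\langle a\rangle=\log(\langle t+r\rangle/\langle t-r\rangle)$, which is exactly the middle case, so nothing more is needed there.

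For $\kappa>0$ I would write $\kappa^{-1}(\langle a\rangle^{-\kappa}-\langle b\rangle^{-\kappa})=\kappa^{-1}\langle a\rangle^{-\kappa}\bigl(1-(\langle a\rangle/\langle b\rangle)^{\kappa}\bigr)$ and use the scalar inequality $1-s^{\kappa}\le \max(1,\kappa)(1-s)$ valid for $s\in[0,1]$ (which follows from convexity/monotonicity considerations: for $\kappa\ge 1$ it is $1-s^\kappa\le \kappa(1-s)$ by the mean value theorem, and for $0<\kappa<1$ one has $1-s^\kappa\le 1-s$ since $s^\kappa\ge s$). Applying this with $s=\langle a\rangle/\langle b\rangle$ gives the bound $\kappa^{-1}\max(1,\kappa)\langle a\rangle^{-\kappa}\cdot\langle b\rangle^{-1}(\langle b\rangle-\langle a\rangle)$, and since $\langle b\rangle-\langle a\rangle=b-a=2\min(r,t)$ this is exactly $\frac{2\max(1,\kappa)}{\kappa}\cdot\frac{\min(r,t)}{\langle t+r\rangle\langle t-r\rangle^{\kappa}}$, as claimed. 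The case $\kappa<0$ is symmetric: writing $\kappa=-|\kappa|$, the integral equals $|\kappa|^{-1}\bigl(\langle b\rangle^{|\kappa|}-\langle a\rangle^{|\kappa|}\bigr)$, and factoring out $\langle b\rangle^{|\kappa|}$ and applying $1-s^{|\kappa|}\le\max(1,|\kappa|)(1-s)$ with $s=\langle a\rangle/\langle b\rangle$ yields $|\kappa|^{-1}\max(1,|\kappa|)\langle b\rangle^{|\kappa|-1}(\langle b\rangle-\langle a\rangle)=\frac{2\max(1,-\kappa)}{-\kappa}\cdot\frac{\min(r,t)}{\langle t+r\rangle^{\kappa+1}}$.

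There is no real obstacle here; the only point requiring a little care is the elementary inequality $1-s^{\kappa}\le\max(1,\kappa)(1-s)$ on $[0,1]$ and its analogue for the negative exponent, together with the bookkeeping that $\langle\,\cdot\,\rangle$ agrees with $1+(\,\cdot\,)$ on the nonnegative reals so that the endpoints $a=|t-r|$ and $b=t+r$ convert cleanly. I would also note the degenerate sub-case $r=0$ or $t=0$, where $a=b$ and both sides vanish, so the inequalities hold trivially. Assembling these pieces gives the lemma.
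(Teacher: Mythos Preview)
Your proposal is correct and follows essentially the same approach as the paper: the paper states only that the lemma ``can be proved by a direct computation and the following elementary inequality: for any $\vartheta>0$, $1-a^{\vartheta}\le \max(\vartheta,1)(1-a)$ for $a\in[0,1]$,'' which is precisely the scalar inequality you isolate and apply with $s=\langle a\rangle/\langle b\rangle$. You have simply written out the details that the paper leaves implicit.
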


This lemma can be proved by a direct computation and the following elementary inequality: For any $\vartheta>0$, the estimate
\begin{equation}
\label{eleme}
1-a^{\vartheta}\le \max(\vartheta,1)(1-a)
\end{equation}
holds for any $a\in [0,1]$.

The next lemma is employed to prove lower estimate of the lifespan in the special case of $\gamma=2$.
\begin{lem}\label{lem4.4}
  Let $\ka>0$. Then there exists a positive constant $C=C(\kappa)>0$ such that for any $(r,t)\in [0,\infty)^2$, the estimate
\begin{equation*}
    \int_{|r-t|}^{r+t}(1+\lambda)^{-\kappa+1}\log (2+\lambda)d\lambda\le \frac{C\min(r,t)\left\{\log(1+\langle t-r\rangle)\right\}}{\langle t+r\rangle\langle t-r\rangle^{\kappa}}
\end{equation*}
holds.
\end{lem}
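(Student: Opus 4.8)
The plan is to follow the pattern of the proof of Lemma \ref{lem4.3}: split the $\la$-integration according to whether $(r,t)$ lies close to or far from the light cone $\{r=t\}$, and in each regime control the left-hand side by a one-dimensional tail integral whose decay is governed by its lower endpoint $|t-r|$. Set $a:=|t-r|$ and $b:=t+r$, so that $0\le a\le b$, $b-a=2\min(r,t)$, $\LR{t-r}=1+a$, $\LR{t+r}=1+b$ and $\log(1+\LR{t-r})=\log(2+a)$; in this notation the claim reads
\[
\int_{a}^{b}(1+\la)^{-(\ka+1)}\log(2+\la)\,d\la\le \frac{C\min(r,t)\log(2+a)}{(1+b)(1+a)^{\ka}}.
\]
The only genuinely new feature compared with Lemma \ref{lem4.3} is the slowly varying factor $\log(2+\la)$, and the point of the whole argument is that it may be replaced by $\log(2+a)$ up to a constant depending only on $\ka$ without spoiling the polynomial decay rate.

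First I would record the one-variable estimate
\[
\int_{A}^{\infty}(1+\la)^{-(\ka+1)}\log(2+\la)\,d\la\le \frac{C(\ka)\log(2+A)}{(1+A)^{\ka}}\qquad(A\ge 0,\ \ka>0),
\]
the analogue of the $\ka>0$ line of Lemma \ref{lem4.3} carrying an extra logarithm. This I would obtain from the scaling substitution $1+\la=(1+A)\mu$, which transforms the left-hand side into $(1+A)^{-\ka}\int_{1}^{\infty}\mu^{-(\ka+1)}\log\bigl(1+(1+A)\mu\bigr)\,d\mu$; since $\log\bigl(1+(1+A)\mu\bigr)\le\log\bigl(2(1+A)\mu\bigr)\le 2\log(2+A)+\log\mu$ for $\mu\ge1$, the remaining factor is at most the absolutely convergent integral $\int_{1}^{\infty}\mu^{-(\ka+1)}\bigl(2\log(2+A)+\log\mu\bigr)\,d\mu=\tfrac{2}{\ka}\log(2+A)+\tfrac{1}{\ka^{2}}$, and $\log(2+A)\ge\log 2$ then gives the asserted bound. (Alternatively, one may bound $\log(2+\la)$ by a small power of $1+\la$ using \eqref{eleme} and integrate directly.)

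It then remains to split into two regimes. If $\LR{t+r}\le 2\LR{t-r}$, i.e. $1+b\le 2(1+a)$, then on $[a,b]$ one has $1+\la\ge 1+a\ge \tfrac12(1+b)$ and $\log(2+\la)\le\log(2+b)\le C\log(2+a)$, so the integrand is at most $C(\ka)(1+b)^{-(\ka+1)}\log(2+a)$ there; integrating over an interval of length $b-a=2\min(r,t)$ and using $1+b\ge 1+a$ yields the claim in this case. If instead $\LR{t+r}>2\LR{t-r}$, then $b-a>1+a$, hence $1+b=(1+a)+(b-a)<2(b-a)=4\min(r,t)$; now estimate $\int_{a}^{b}\le\int_{a}^{\infty}$, apply the one-variable bound with $A=a$, and absorb the resulting $(1+a)^{-\ka}\log(2+a)$ into the right-hand side via $1\le 4\min(r,t)/(1+b)$. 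Combining the two cases gives the lemma (with $C(\ka)$ blowing up as $\ka\downarrow 0$, through the factors $1/\ka$ and $1/\ka^2$).

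The step requiring the most care is the one-variable tail estimate: one has to verify that inserting the logarithm neither destroys the decay exponent $(1+A)^{-\ka}$ nor — more subtly, in the far regime — forces the larger $\log\LR{t+r}$ in place of $\log(1+\LR{t-r})$ on the right. Both are true precisely because the weight $(1+\la)^{-(\ka+1)}$ decays fast enough that the mass of $\int_{|t-r|}^{\infty}$ is concentrated near the lower endpoint $\la\approx|t-r|$, where the logarithm equals $\log(1+\LR{t-r})$ up to a constant; the scaling substitution above is exactly the device that makes this localization explicit and keeps all constants depending only on $\ka$.
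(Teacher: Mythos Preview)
Your argument is correct. The paper itself does not give a proof of this lemma at all --- it simply refers the reader to Lemma~2.3 of \cite{K04} --- so there is no in-paper proof to compare against; what you have written is a clean self-contained argument in the spirit of Lemma~\ref{lem4.3}, with the scaling substitution $1+\la=(1+A)\mu$ doing exactly the job of localizing the logarithmic weight at the lower endpoint. Note also that you have silently (and correctly) read the exponent as $-(\ka+1)$ rather than the paper's $-\ka+1$: the latter is a typo, since with exponent $1-\ka$ the stated bound is false for small $\ka>0$, and the intended exponent is the one matching Lemma~\ref{lem4.3} and the application in the $\ga=2$ case of Proposition~\ref{lm:apriori1}.
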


For the proof of this lemma, see Lemma 2.3 in \cite{K04}.

\subsection{Introduction of the solution space $X(T)$}
\label{solsp}
\ \ For $T\in (0,\infty]$ and $R>0$, we introduce the solution space $X(T)=X(T,R)$ to the problem (\ref{IVP-2}) with the data $(v_0,v_1)\in C^2(\R^3)\times C^1(\R^3)$ satisfying the condition (\ref{supp}) given by
\begin{equation}
\label{sols}
 X(T)=X(T,R):=\{u\in C\left(\R^3\times[0,T)\right):\|u\|_{X(T)}<\I,\ \supp u\subset P(T,R)\},
\end{equation}
where the set $P(T,R)$ is defined by (\ref{finitp}) and the norm $\|\cdot\|_{X(T)}:X(T)\rightarrow \R_{\ge 0}$ is defined by
\[
	\|u\|_{X(T)}
	 :=\sup_{(x,t)\in\R^3\times[0,T)}
	  \tau_{+}(|x|,t)N_{\gamma}(\tau_{-}(|x|,t))|u(x,t)|.
\]
Here the weight functions $\tau_{\pm}:\R_{\ge 0}\times \R_{\ge 0}\rightarrow \R$ are given by
\[
\tau_{\pm}(r,t):=\frac{t\pm r+2R}{R},
\]
where the double-sign corresponds. Moreover for $\gamma\in \left(-\frac{1}{2},3\right)$, the function $N_{\gamma}:\R_{\ge0}\rightarrow \R_{\ge 0}$ is defined by
\begin{equation}
\label{wightf}
    N(\varrho)=N_{\gamma}(\varrho):=
    \left\{
\begin{array}{lll}
{\varrho}^{\gamma+1},\  &\mbox{if}&  \ \gamma\in \left(-\frac{1}{2},2\right),\\
\displaystyle\frac{{\varrho}^{3}}{\log (1+\varrho)}, \ &\mbox{if}& \ \gamma=2,\\
{\varrho}^3,\  &\mbox{if}&  \ \gamma\in \left(2,3\right).
\end{array}
\right.
\end{equation}

\begin{rem}
For $T\in (0,\infty]$ and $R>0$, the space $\left(X(T,R),\|\cdot\|_{X(T)}\right)$ is a Banach space.
\end{rem}

In order to construct a solution to the integral equation (\ref{IE_u_depend}) on the Banach space $X(T)$, we introduce the nonlinear mapping $\Gamma$ defined by
\begin{equation}
\label{nonlima}
     \Gamma[u](x,t):=u^0(x,t)+L\left((V_{\gamma}*u^2)u\right)(x,t),
\end{equation}
for $u\in X(T)$, where the right-hand side of (\ref{nonlima}) is same as that of (\ref{IE_u_depend}). In subsection \ref{existe} below, we discuss whether the nonlinear mapping $\Gamma$ is a contraction mapping from a closed ball $X(T,M)$ (for the definition, see \ref{closedb}) in $X(T)$ into itself.

\subsection{Multilinear estimates}
\label{nonli}

\ \ In this subsection, we estimate the convolution term in a point-wise sense (Theorem \ref{lem:potential}) and the Duhamel term (Proposition \ref{lm:apriori1}).

\ \ The next theorem means boundedness of the convolution term from $L_{x,t}^{\infty}(\R^3\times[0,T))$ to $X(T)\times X(T)$:
\begin{thm}[A bilinear estimate from $L^{\infty}_{x,t}(\R^3\times [0,T))$ to $X(T)\times X(T)$]
\label{lem:potential}
Let $T\in (0,\infty]$ and $\ga\in (-\frac{1}{2},3)$. Then there exists a positive constant $C_1=C_1(\gamma)>0$ depending only on $\gamma$ such that for any $u,w\in X(T)$, the estimate
\begin{align}
\label{potential}
\left|\left(V_{\gamma}*(uw)\right)(x,t)\right|
\le C_1\|u\|_{X(T)}\|w\|_{X(T)}\{\mathcal{W}_R(|x|,t)\}^{-1}
\end{align}
holds for any $(x,t)\in P(T,R)$, where $\mathcal{W}_R:\R_{\ge 0}\times \R_{\ge 0}\rightarrow \R_{\ge 0}$ is a weight function defined by
\[
\mathcal{W}_R(r,t):=\begin{cases}
  R^{\gamma-3}\left\{\tau_{+}(r,t)\right\}^{2}, &\ \mbox{if}
\ \gamma\in (2,3), \\
  R^{\gamma-3}\left\{\tau_{+}(r,t)\right\}^{\gamma}, &\ \mbox{if}
\ \gamma\in \left(-\frac{1}{2},2\right), \\
  R^{-1}\log(1+R)\left\{\tau_{+}(r,t)\right\}^{2}\left\{\log\left(1+\tau_+(r,t)\right)\right\}^{-1}, &\
\mbox{if}\ \gamma=2.
\end{cases}
\]
\end{thm}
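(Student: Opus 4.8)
The plan is to estimate the convolution integral $\left(V_\gamma*(uw)\right)(x,t) = \int_{\R^3}|x-y|^{-\gamma}u(y,t)w(y,t)\,dy$ directly in the spatial variable, at fixed time $t$, by inserting the pointwise bound coming from the definition of the $X(T)$-norm. Since $u,w\in X(T)$ are supported in $P(T,R)=\{|y|\le t+R\}$, we have for all $y$ in the support
\[
|u(y,t)w(y,t)|\le \|u\|_{X(T)}\|w\|_{X(T)}\,\tau_+(|y|,t)^{-2}N_\gamma(\tau_-(|y|,t))^{-2}.
\]
So everything reduces to bounding the weighted integral
\[
I(x,t):=\int_{|y|\le t+R}|x-y|^{-\gamma}\,\tau_+(|y|,t)^{-2}N_\gamma(\tau_-(|y|,t))^{-2}\,dy
\]
by $C_1(\gamma)\{\mathcal{W}_R(|x|,t)\}^{-1}$. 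First I would reduce to a one-dimensional integral: passing to polar coordinates centered at $x$, or alternatively using Lemma \ref{lm:Planewave} (the plane-wave / spherical-mean identity) to integrate out the angular variables, the integral $\int_{|\omega|=1}|x-\rho\omega|^{-\gamma}\,dS_\omega$-type quantity becomes a one-dimensional integral in $\rho=|y|$ against the kernel $\frac{2\pi}{|x|\rho}\int_{\bigl||x|-\rho\bigr|}^{|x|+\rho}\lambda^{1-\gamma}\,d\lambda$, and then $I(x,t)$ collapses to an integral over $\rho\in[0,t+R]$ of an explicit radial profile. The key technical point here is that $1-\gamma>-2$ for $\gamma<3$, so $\lambda^{1-\gamma}$ is locally integrable near $\lambda=0$ and the inner $\lambda$-integral is finite and comparable to an elementary power of the endpoints; one must keep track of whether $|x|>\rho$ or $|x|<\rho$, and of the borderline exponent $\gamma=2$ where a logarithm appears.

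Next I would split the $\rho$-integration according to the sign of $\tau_-(\rho,t)=(t-\rho+2R)/R$, i.e. according to whether $\rho$ is in the "light-cone interior" region $\rho\le t$ (where $\tau_-\gtrsim 1$) or the thin outer shell near $\rho\approx t+R$; and further according to the three cases in the definition \eqref{wightf} of $N_\gamma$. In the regime $\gamma\in(-\tfrac12,2)$ one has $N_\gamma(\tau_-)^{-2}=\tau_-^{-2(\gamma+1)}$, and one must check that $2(\gamma+1)$ is large enough that the $\rho$-integral converges at the relevant endpoint while producing exactly the decay $\tau_+(|x|,t)^{-\gamma}$ (up to the factor $R^{\gamma-3}$ that accounts for scaling in $R$); the lower bound $\gamma>-\tfrac12$ is presumably exactly what guarantees $2(\gamma+1)>1$, which is the integrability threshold in one dimension. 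For $\gamma\in(2,3)$ one has $N_\gamma^{-2}=\tau_-^{-6}$, ample decay, and the bound $\tau_+(|x|,t)^{-2}$ comes out; the case $\gamma=2$ is handled the same way with the logarithmic correction $\tau_-^{-6}(\log(1+\tau_-))^{2}$ tracked through Lemma \ref{lem4.3} (the $\kappa=0$ and $\kappa>0$ cases) or a direct estimate. Throughout, the elementary inequalities in Lemma \ref{lem4.3} — which convert $\int(1+\lambda)^{-(\kappa+1)}d\lambda$ over $[\,|r-t|,r+t\,]$ into the product form $\frac{\min(r,t)}{\langle t+r\rangle\langle t-r\rangle^\kappa}$ — are exactly the engine that turns the one-dimensional integrals into the stated product weights.

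The main obstacle I expect is the careful bookkeeping of the interaction between the three ranges of $\gamma$, the two (or three) spatial sub-regions of $\rho$, and the position of $|x|$ relative to the light cone: one genuinely has to verify that in every combination the resulting exponent of $\tau_+(|x|,t)$ is the one claimed, that no logarithmic loss sneaks in except at $\gamma=2$, and that the constant depends only on $\gamma$ (not on $T$, and with the $R$-dependence exactly $R^{\gamma-3}$, resp. $R^{-1}\log(1+R)$). A secondary subtlety is the behavior of the convolution kernel $|x-y|^{-\gamma}$ for $\gamma<0$, where it grows at infinity: there one must use the compact support $|y|\le t+R$ together with $|x|\le t+R$ to bound $|x-y|^{-\gamma}=|x-y|^{|\gamma|}\le(|x|+|y|)^{|\gamma|}\lesssim \tau_+(|x|,t)^{|\gamma|}R^{|\gamma|}$, and check this gain is absorbed correctly — this is why the hypothesis is $\gamma>-\tfrac12$ rather than merely $\gamma>0$, and why the final weight for $\gamma<0$ is still of the form $R^{\gamma-3}\tau_+^{\gamma}$ (with $\gamma<0$, so this is genuinely a growth bound, consistent with the convolution of a growing kernel against compactly supported, decaying data). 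Once the region decomposition is fixed, each piece is a routine one-variable integral estimate via Lemmas \ref{lem4.3}--\ref{lem4.4}.
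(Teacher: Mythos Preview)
Your overall strategy --- insert the $X(T)$ pointwise bound, pass to polar coordinates $y=\rho\omega$ via Lemma~\ref{lm:Planewave}, and reduce to a one-dimensional $\rho$-integral --- is exactly what the paper does for $\gamma\in(-\tfrac12,2)$, and your observation that $2(\gamma+1)>1$ is precisely the integrability condition that pins down the lower endpoint $\gamma>-\tfrac12$. (Your separate treatment of $\gamma<0$ via the crude bound $|x-y|^{|\gamma|}\lesssim\tau_+^{|\gamma|}$ is unnecessary: the estimate $\int_{|r-\rho|}^{r+\rho}\lambda^{1-\gamma}\,d\lambda\lesssim\min(r,\rho)(r+\rho)^{1-\gamma}$ holds uniformly for all $\gamma<2$, and the paper uses it without distinguishing the sign of $\gamma$.)

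For $\gamma\ge2$ there is a genuine gap. Your claim that ``$1-\gamma>-2$ for $\gamma<3$, so $\lambda^{1-\gamma}$ is locally integrable near $\lambda=0$'' is false: one-dimensional local integrability of $\lambda^{1-\gamma}$ at $0$ requires $1-\gamma>-1$, i.e.\ $\gamma<2$. For $\gamma\in[2,3)$ the inner integral $\int_{|r-\rho|}^{r+\rho}\lambda^{1-\gamma}\,d\lambda$ blows up like $|r-\rho|^{2-\gamma}$ (resp.\ like $\log\frac{r+\rho}{|r-\rho|}$ at $\gamma=2$) as $\rho\to r=|x|$. This singularity \emph{is} still integrable in $\rho$ since $2-\gamma>-1$, so your route can be salvaged --- but your proposed ``light-cone interior vs.\ outer shell'' split (based on $\rho$ versus $t$) is orthogonal to the difficulty and does nothing to isolate it: the singularity sits at $\rho=r$, not at $\rho=t$. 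The paper handles $\gamma\ge2$ by instead splitting the original domain at $|x-y|=R/2$. On the near piece $\{|x-y|\le R/2\}$ the weights $\tau_\pm(|y|,t)$ are comparable to $\tau_\pm(|x|,t)$ and one integrates $|x-y|^{-\gamma}$ directly in three dimensions (finite since $\gamma<3$); on the far piece one replaces $|x-y|^{-\gamma}$ by $4^\gamma(|x-y|+R)^{-\gamma}$, which removes the singularity, and then runs the spherical-mean reduction with $b(\lambda)=(\lambda+R)^{-\gamma}$ together with Lemma~\ref{lem4.3} (taking $\kappa=\gamma-2>0$) for $\gamma\in(2,3)$, and with the logarithmic estimate~(\ref{3-21-2}) for $\gamma=2$. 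This near/far decomposition around $x$ is the missing ingredient in your plan for $\gamma\ge 2$.
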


\begin{proof}[Proof of Theorem \ref{lem:potential}]
Let $(x,t)\in P(T,R)$. Set $r:=|x|$. We divide the proof into the three cases where $\gamma\in \left(-\frac{1}{2},2\right)$, $\gamma\in (2,3)$ and $\gamma=2$.\\
{\bf Case1.}\ $\gamma\in \left(-\frac{1}{2},2\right)$: In this case, the identity $N_{\gamma}(\varrho)=\rho^{\gamma+1}$ holds. We note that by $\gamma<2$, the estimate (\ref{eleme}) with $\vartheta=2-\gamma>0$, the inequalities
\begin{align}
    \int_{|r-\rho|}^{r+\rho}\lambda^{1-\gamma}d\lambda
    %=\frac{1}{2-\gamma}(r+\rho)^{2-\gamma}\left\{1-\left(\frac{|r-\rho|}{r+\rho}\right)^{2-\gamma}\right\}\notag\\
    %&\le \frac{\min(1,2-\gamma)}{2-\gamma}(r+\rho)^{2-\gamma}\left(1-\frac{|r-\rho|}{r+\rho}\right)\notag\\
    \le\frac{2\min(1,2-\gamma)}{2-\gamma}\min(r,\rho)(r+\rho)^{1-\gamma}
    \label{3-12}
\end{align}
hold for $\rho\ge 0$. We also note that if the inequalities $\rho\ge 0$ and $r\le t+R$ hold, the estimate
\begin{equation}
\label{3-13}
    t+\rho+2R\ge \frac{1}{2}(t+r+2R)
\end{equation}
holds, which implies that the inequality $\tau_{+}(\rho,t)\ge \frac{1}{2}\tau_{+}(r,t)$ holds. By the definition of $X(T)$-norm, changing variables with $P(T,R)\ni y=\rho\omega$ ($\rho\in \left(0,t+R\right)$ and $\omega\in S_2$), Lemma \ref{Planewave} with $b(r)=|r|^{-\gamma}$, the estimate (\ref{3-12}) and the above estimate, the estimates
\begin{align}
    &\text{L.H.S of (\ref{potential})}\le \int_{P(T,R)}\frac{\left|u(y,t)\right|\left|w(y,t)\right|}{|x-y|^\ga}dy\notag\\
%    &\le\|u\|_{X(T)}\|w\|_{X(T)}\int_{P(T,R)}\frac{1}{\left\{\tau_+(|y|,t)\right\}^2\left\{\tau_-(|y|,t)\right\}^{2(\gamma+1)}|x-y|^{\gamma}}dy\notag\\
%    &=\|u\|_{X(T)}\|w\|_{X(T)}\int_{0}^{t+R}\frac{\rho^2}{\left\{\tau_+(\rho,t)\right\}^2\left\{\tau_-(\rho,t)\right\}^{2(\gamma+1)}}\left(\int_{|\omega|=1}\frac{1}{|x-\rho\omega|^{\gamma}}dS_{\omega}\right)d\rho\notag\\
    &=\|u\|_{X(T)}\|w\|_{X(T)}\frac{2\pi}{r}\int_{0}^{t+R}\frac{\rho}{\left\{\tau_+(\rho,t)\right\}^2\left\{\tau_-(\rho,t)\right\}^{2(\gamma+1)}}\left(\int_{|r-\rho|}^{r+\rho}\lambda^{1-\gamma}d\lambda\right)d\rho\notag\\
  &\le\|u\|_{X(T)}\|w\|_{X(T)}\frac{4\pi\min(1,2-\gamma)}{2-\gamma}\cdot\frac{1}{r}\int_{0}^{t+R}\frac{\rho\min(r,\rho)(r+\rho)^{1-\gamma}}{\left\{\tau_+(\rho,t)\right\}^2\left\{\tau_-(\rho,t)\right\}^{2(\gamma+1)}}d\rho\notag\\
  &=\|u\|_{X(T)}\|w\|_{X(T)}\frac{16\pi\min(1,2-\gamma)}{2-\gamma}R^{2-\gamma}\left\{\tau_+(r,t)\right\}^{-\gamma}\int_{0}^{t+R}\left\{\tau_-(\rho,t)\right\}^{-2(\gamma+1)}d\rho\notag\\
  &\le \|u\|_{X(T)}\|w\|_{X(T)}\frac{16\pi\min(1,2-\gamma)\left(1-3^{-2\gamma-1}\right)}{(2-\gamma)(2\gamma+1)}R^{3-\gamma}\left\{\tau_+(r,t)\right\}^{-\gamma}\notag
\end{align}
hold, which implies the estimate (\ref{potential}).\\
{\bf Case2.}\ $\gamma\in \left(2,3\right)$: In this case, the identity $N_{\gamma}(\varrho)=\varrho^3$ holds. We divide the integral region $P(T,R)$ into the two pieces $P_{\le}(T,R):=P(T,R)\cap \left\{y\in \R^3:|x-y|\le \frac{R}{2}\right\}$ and $P_{>}(T,R):=P(T,R)\cap \left\{y\in \R^3:|x-y|>\frac{R}{2}\right\}$. We note that for any $y\in P_{\le}(T,R)$, the estimates
\[
   t+|y|+2R\ge t+|x|+\frac{3}{2}R\ge \frac{3}{4}(t+|x|+2R)
\]
hold, which implies the estimate $\tau_{+}(|y|,t)\ge \frac{3}{4}\tau_+(|x|,t)$ holds. We also note that for any $y\in A_{\le}(T,R)$, the estimates
\begin{equation}
\label{3-15-1}
   t-|y|+2R\ge t-|x|+\frac{3}{2}R\ge \frac{3}{4}(t-|x|+2R)\ge \frac{3}{4}R
\end{equation}
hold, which implies the estimate $\tau_{-}(|y|,t)\ge \frac{3}{4}\tau_{-}(|x|,t)\ge \frac{3}{4}$ holds. By the definition of $X(T)$-norm, the above estimates of the weight functions $\tau_{\pm}$ and the assumption $\gamma<3$, the estimates
\begin{align}
  &\int_{P_{\le}(T,R)}\frac{\left|u(y,t)\right|\left|w(y,t)\right|}{|x-y|^\ga}dy\notag\\
    &\le\|u\|_{X(T)}\|w\|_{X(T)}\int_{P_{\le}(T,R)}\frac{1}{\left\{\tau_+(|y|,t)\right\}^2\left\{\tau_-(|y|,t)\right\}^{6}|x-y|^{\gamma}}dy\notag\\
    &\le\|u\|_{X(T)}\|w\|_{X(T)}\left(\frac{4}{3}\right)^8\left\{\tau_+(|x|,t)\right\}^{-2}\int_{P_{\le}(T,R)}|x-y|^{-\gamma}dy\notag\\
    &\le \|u\|_{X(T)}\|w\|_{X(T)}\frac{4^8\cdot2\pi}{3^8(3-\gamma)2^{3-\gamma}}R^{3-\gamma}\left\{\tau_+(|x|,t)\right\}^{-2}\label{3-14}
\end{align}
hold. We note that for $y\in P_{>}(T,R)$, the estimate
\[
   |x-y|\ge \frac{1}{4}(|x-y|+R)
\]
holds, which implies that the inequality $|x-y|^{-\gamma}\le 4^{\gamma}(|x-y|+R)^{-\gamma}$ holds due to $\gamma\ge0$. By changing variables with $y=\rho\omega$ ($\rho\in \left(0,t+R\right)$), Lemma \ref{Planewave} with $b(r)=|r+R|^{-\gamma}$, Lemma \ref{lem4.3} with $\kappa=\gamma-2>0$ and the estimate (\ref{3-13}), the inequalities
\begin{align}
      &\int_{P_{>}(T,R)}\frac{\left|u(y,t)\right|\left|w(y,t)\right|}{|x-y|^\ga}dy\notag\\
      &\le\|u\|_{X(T)}\|w\|_{X(T)}4^{\gamma}\int_{P(T,R)}\frac{1}{\left\{\tau_+(|y|,t)\right\}^2\left\{\tau_-(|y|,t)\right\}^{6}(|x-y|+R)^{\gamma}}dy\notag\\
%    &=\|u\|_{X(T)}\|w\|_{X(T)}4^{\gamma}\int_{0}^{t+R}\frac{\rho^2}{\left\{\tau_+(\rho,t)\right\}^2\left\{\tau_-(\rho,t)\right\}^{6}}\left(\int_{|\omega|=1}\frac{1}{(|x-\rho\omega|+R)^{\gamma}}dS_{\omega}\right)d\rho\notag\\
%    &=\|u\|_{X(T)}\|w\|_{X(T)}\frac{(2\pi)4^{\gamma}}{r}\int_{0}^{t+R}\frac{\rho}{\left\{\tau_+(\rho,t)\right\}^2\left\{\tau_-(\rho,t)\right\}^{6}}\left(\int_{|r-\rho|}^{r+\rho}\lambda(\lambda+R)^{-\gamma}d\lambda\right)d\rho\notag\\
  &\le\|u\|_{X(T)}\|w\|_{X(T)}\frac{(4\pi)4^{\gamma}\min(1,\gamma-2)}{\gamma-2}\notag\\
  &\ \times\frac{1}{r}\int_{0}^{t+R}\frac{\rho\min(r,\rho)}{\left\{\tau_+(\rho,t)\right\}^2\left\{\tau_-(\rho,t)\right\}^{6}(\rho+r+R)(|\rho-r|+R)^{\gamma-2}}d\rho\notag\\
  &\le\|u\|_{X(T)}\|w\|_{X(T)}\frac{(4\pi)4^{\gamma}\min(1,\gamma-2)}{\gamma-2}R^{2-\gamma}\left\{\tau_+(r,t)\right\}^{-2}\int_{0}^{t+R}\left\{\tau_-(\rho,t)\right\}^{-6}d\rho\notag\\
  &\le \|u\|_{X(T)}\|w\|_{X(T)}\frac{(4\pi)4^{\gamma}\min(1,2-\gamma)\left(1-3^{-5}\right)}{5(\gamma-2)}R^{3-\gamma}\left\{\tau_+(r,t)\right\}^{-2}\label{3-15}
\end{align}
hold. The estimates (\ref{3-14}) and (\ref{3-15}) imply the estimate (\ref{potential}).\\
{\bf Case3.}\ $\gamma=2$: In this case, the identity $N_{2}(\varrho)=\frac{\varrho^{3}}{\log(1+\varrho)}$ holds. We note that the function $N_\gamma$ is monotone increasing on $\R_{\ge 0}$. We divide the integral region $P(T,R)$ into the two pieces $P_{\le}(T,R)$ and $P_>(T,R)$ as Case2. For any $y\in P_{\le }(T,R)$, in the same manner as the proof of (\ref{3-15-1}), the estimates
\begin{equation}
    N_{\gamma}(\tau_{-}(|y|,t))\ge N_{\gamma}\left(\frac{3}{4}\tau_{-}(|x|,t)\right)\ge \left(\frac{3}{4}\right)^3N_{\gamma}(\tau_{-}(|x|,t)))
\end{equation}
hold. In the same manner as the proof of (\ref{3-14}), the inequalities
\begin{align}
  &\int_{P_{\le}(T,R)}\frac{\left|u(y,t)\right|\left|w(y,t)\right|}{|x-y|^2}dy\notag\\
    &\le\|u\|_{X(T)}\|w\|_{X(T)}\int_{P_{\le}(T,R)}\frac{1}{\left\{\tau_+(|y|,t)\right\}^2\left\{N(\tau_-(|y|,t))\right\}^{2}|x-y|^{2}}dy\notag\\
    &\le\|u\|_{X(T)}\|w\|_{X(T)}\left(\frac{4}{3}\right)^8\left\{\tau_+(|x|,t)\right\}^{-2}\left\{N(\tau_-(|x|,t))\right\}^{-2}\int_{P_{\le}(T,R)}|x-y|^{-2}dy\notag\\
    &\le \|u\|_{X(T)}\|w\|_{X(T)}\frac{4^8\cdot2\pi}{3^8(3-\gamma)2^{3-\gamma}}R^{3-\gamma}\left\{\tau_+(|x|,t)\right\}^{-2}\left\{N(\tau_-(|x|,t))\right\}^{-2}\label{3-19}
\end{align}
hold. Next we consider the case where $y$ belongs to $P_{>}(T,R)$. We note that the estimates
\begin{equation}
\label{3-21-2}
     \frac{1}{r}\int_{|r-\rho|}^{r+\rho}\lambda\left(\lambda+R\right)^{-2}d\lambda
     \le\frac{1}{r}\log\frac{r+\rho+R}{|r-\rho|+R}\le \frac{6}{\log 2}\cdot\frac{\log(r+\rho+2R)}{r+\rho}
\end{equation}
hold for any $r,\rho>0$. Indeed, the right estimate of (\ref{3-21-2}) is verified as follows. We divide the proof into the two cases, $2r\le \rho$ and $2r>\rho$. We consider the case where $2r\le \rho$. Then the estimate $\rho-r\ge \frac{1}{3}(\rho+r)>0$ holds. By using this estimate, the estimates
\begin{align*}
    \frac{r+\rho+R}{|r-\rho|+R}=1+\frac{2\min(r,\rho)}{R+|r-\rho|}\le 1+\frac{2r}{R+\rho-r}\le 1+\frac{6r}{\rho+r+2R}
\end{align*}
hold. Thus noting that the estimate $\log (1+\theta)\le \theta$ holds for any $\theta\ge 0$, the estimates
\begin{align*}
  \frac{1}{r}\log\frac{r+\rho+R}{|r-\rho|+R}&\le \frac{1}{r}\log\left(1+\frac{6r}{\rho+r+2R}\right)\le \frac{6}{\rho+r+2R}\\
  &\le \frac{6}{\log 2}\cdot\frac{\log(r+\rho+2R)}{r+\rho}
\end{align*}
hold. Next we consider the case where $2r>\rho$. Then the estimate $r>\frac{1}{3}(r+\rho)$ holds. Thus by the estimate $R>1$, the inequality
\[
\frac{1}{r}\log\frac{r+\rho+R}{|r-\rho|+R}\le 3\frac{\log(r+\rho+2R)}{r+\rho}
\]
is valid, which completes the proof of the estimate (\ref{3-21-2}). In the similar manner as the proof of the estimate (\ref{3-15}), by using the estimate (\ref{3-21-2}), the estimates
\begin{align}
           &\int_{P_{>}(T,R)}\frac{\left|u(y,t)\right|\left|w(y,t)\right|}{|x-y|^2}dy\notag\\
    &\le\|u\|_{X(T)}\|w\|_{X(T)}(2\pi)4^{2}\int_{0}^{t+R}\frac{\rho\left\{\log(1+\tau_{-}(\rho,t))\right\}^2}{\left\{\tau_+(\rho,t)\right\}^2\left\{\tau_-(\rho,t)\right\}^{6}}\left\{\frac{1}{r}\int_{|r-\rho|}^{r+\rho}\lambda(\lambda+R)^{-2}d\lambda\right\}d\rho\notag\\
  &\le\|u\|_{X(T)}\|w\|_{X(T)}\frac{(2\pi)4^{2}6}{\log 2}
  \int_{0}^{t+R}\frac{\rho\left\{\log(1+\tau_-(\rho,t))\right\}^2\log(r+\rho+2R)}{\left\{\tau_+(\rho,t)\right\}^2\left\{\tau_-(\rho,t)\right\}^{6}(\rho+r)}d\rho\notag\\
  &\le\|u\|_{X(T)}\|w\|_{X(T)}\frac{(2\pi)4^{2}6}{\log 2}\left(\frac{\log 4R}{\log 2}+1\right)\left\{\tau_+(r,t)\right\}^{-2}\log\left(1+\tau_+(r,t)\right)\notag\\
  &\ \ \ \times\int_{0}^{t+R}\frac{\left\{\log(1+\tau_-(\rho,t))\right\}^2}{\left\{\tau_-(\rho,t)\right\}^{6}}d\rho\notag\\
  &\le\|u\|_{X(T)}\|w\|_{X(T)}\frac{(2\pi)4^{2}6^3}{5^2}\log(1+R)R\left\{\tau_+(r,t)\right\}^{-2}\log\left(1+\tau_+(r,t)\right)\notag
\end{align}
hold, which completes the proof of the theorem.

\end{proof}

The next proposition means boundedness of a trilinear operator associated with the integral operator $L$ given by (\ref{L}) and the convolution term from $X(T)$ to $X(T)\times X(T)\times X(T)$:
\begin{prop}[A trilinear estimate from $X(T)$ to $(X(T))^3$]\label{lm:apriori1}
Let $\gamma\in \left(-\frac{1}{2},3\right)$, $T>0$, $u_1,u_2,u_3\in X(T)$ and $L$ be the integral operator on $C(\R^3\times[0,T))$ given by {\rm (\ref{L})}. Then there exists a positive constant $C_2=C_2(\gamma,R)>0$ depending only on $\gamma,R$ such that the estimate
\EQQ{
&\|L((V_{\gamma}*(u_1u_2))u_3)\|_{X(T)}\\
&\le C_2 D_{\gamma}(T)\prod_{i=1}^3\|u_i\|_{X(T)}
\begin{cases}
  R^{5-\gamma}, &\text{if}\ \gamma\in\left(-\frac{1}{2},3\right)\ \text{and}\ \gamma\ne 2,\\
  R^{3}\log(1+R),&\text{if}\ \gamma=2
\end{cases}
}
holds, where $D_{\gamma}:\R_{>0}\rightarrow \R_{\ge 0}$ is a function defined by
\begin{equation}
\label{D}
D(T)=D_{\gamma}(T):=
\left\{
\begin{array}{llll}
\gamma^{-1}, \ &\mbox{if}& \ \gamma\in (0,3),\\
\d \displaystyle\log\frac{T+2R}{R}, &\mbox{if}& \ \gamma=0,\\
\d (-\gamma)^{-1}\left(\frac{T+R}{R}\right)^{-\gamma}, &\mbox{if}& \
 \gamma\in \left(-\frac{1}{2},0\right).\\
\end{array}
\right.
\end{equation}
\end{prop}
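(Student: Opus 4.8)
\textbf{Proof strategy for Proposition \ref{lm:apriori1}.}

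The plan is to fix $(x,t)\in P(T,R)$, set $r=|x|$, and estimate $|L((V_\gamma*(u_1u_2))u_3)(x,t)|$ pointwise so that, after multiplying by the weight $\tau_+(r,t)N_\gamma(\tau_-(r,t))$, we recover the claimed bound on $\|\cdot\|_{X(T)}$. By Lemma \ref{lem4.2}(2) together with the representation in Corollary \ref{repre}, the Duhamel term is dominated by
\[
\frac{1}{4r}\int_{-R}^{t-r}\int_{|t-r|}^{t+r}\frac{\alpha-\beta}{2}\,\psi\!\left(\tfrac{\alpha-\beta}{2},\tfrac{\alpha+\beta}{2}\right)d\alpha\,d\beta,
\]
where $\psi(\lambda,s)$ is a pointwise majorant for $\bigl|(V_\gamma*(u_1u_2))(\cdot,s)u_3(\cdot,s)\bigr|/(1+s)^2$. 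The first step is to produce this majorant: apply Theorem \ref{lem:potential} to bound $|(V_\gamma*(u_1u_2))(y,s)|$ by $C_1\|u_1\|_{X(T)}\|u_2\|_{X(T)}\{\mathcal W_R(|y|,s)\}^{-1}$, and use the definition of the $X(T)$-norm to bound $|u_3(y,s)|$ by $\|u_3\|_{X(T)}\{\tau_+(|y|,s)N_\gamma(\tau_-(|y|,s))\}^{-1}$. Thus
\[
\psi(\lambda,s)=C_1\prod_{i=1}^3\|u_i\|_{X(T)}\cdot\frac{1}{(1+s)^2\,\mathcal W_R(\lambda,s)\,\tau_+(\lambda,s)\,N_\gamma(\tau_-(\lambda,s))},
\]
and on the relevant region $\lambda\le s+R$, so $\tau_\pm$ and $N_\gamma$ are comparable to explicit powers of $s\pm\lambda+2R$ (a logarithmic correction when $\gamma=2$). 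The key observation driving the whole computation is that in the $(\alpha,\beta)$ variables one has $s+\lambda=\alpha$ and $s-\lambda=\beta$, so $\tau_+(\lambda,s)=(\alpha+2R)/R$ and $\tau_-(\lambda,s)=(\beta+2R)/R$ become single-variable; moreover $(1+s)^2\gtrsim (1+\alpha)^2/4$ since $\alpha=s+\lambda\le 2s+R$ and $R\ge 1$, so the damping factor contributes a clean extra power $(1+\alpha)^{-2}$.

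The second step is to carry out the double integral. After substitution the integrand factorizes into a function of $\alpha$ times a function of $\beta$ (up to the harmless factor $\tfrac{\alpha-\beta}{2}$, which one absorbs by noting $\tfrac{\alpha-\beta}{2}=\lambda\le r+s$ and pairing it against one power of $\tau_+$, or simply by $\alpha-\beta\le 2(t+r+2R)$ on the domain of integration). The $\beta$-integral runs over $\beta\in[-R,t-r]$ and, because $N_\gamma(\tau_-)$ carries a power $\varrho^{\gamma+1}$ (for $\gamma<2$) or $\varrho^3$ (for $\gamma\ge 2$) which is squared through the product $\mathcal W_R\cdot N_\gamma$ — wait, more precisely $\mathcal W_R$ already contains $\{\tau_+\}^{\gamma}$ or $\{\tau_+\}^2$ in the plus-variable only, so the $\beta$-integrand is $(\beta+2R)^{-(\gamma+1)}$ or $(\beta+2R)^{-3}$, which is integrable at $+\infty$ uniformly in $t,r$ when $\gamma>-1/2$; this yields a constant times $R^{\text{power}}$. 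The $\alpha$-integral runs over $\alpha\in[|t-r|,\,t+r]$ with integrand of size $(1+\alpha)^{-2}\{\tau_+(\text{something})\}^{-\gamma}$ coming from $\mathcal W_R$, and here Lemma \ref{lem4.3} with $\kappa$ chosen appropriately (and Lemma \ref{lem4.4} when $\gamma=2$ to handle the logarithm) delivers exactly $\min(r,t)\,\langle t+r\rangle^{-1}\langle t-r\rangle^{-\kappa}$. Multiplying back by $\frac1r$ converts $\min(r,t)/r\le 1$, and one checks that the resulting power of $(t+r+2R)/R$ in the denominator matches $\tau_+(r,t)^{-1}$ while the power of $(t-r+2R)/R$ matches $N_\gamma(\tau_-(r,t))^{-1}$, so dividing by the weight in the $X(T)$-norm leaves precisely $C_2\,D_\gamma(T)\prod\|u_i\|_{X(T)}$ times the stated $R$-factor. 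The function $D_\gamma(T)$ arises entirely from the $\alpha$- or $\beta$-integration at the endpoint dictated by the sign of $\gamma$: for $\gamma>0$ the relevant integral converges and gives $\gamma^{-1}$; for $\gamma=0$ it produces the logarithm $\log\frac{T+2R}{R}$; for $\gamma<0$ it diverges polynomially and is cut off at $t\le T$, giving $(-\gamma)^{-1}\{(T+R)/R\}^{-\gamma}$.

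\textbf{Main obstacle.} The bookkeeping is the delicate part: one must track which power of $\tau_+$ lands against the $\alpha$-integral versus which lands against the $\lambda$-factor $\tfrac{\alpha-\beta}{2}$, and verify that after integrating in $\alpha$ the remaining decay in $t-r$ is exactly what $N_\gamma(\tau_-(r,t))$ demands — no more, no less. Because $\mathcal W_R$ from Theorem \ref{lem:potential} already spent the admissible decay in the plus-direction differently in the three ranges $\gamma<2$, $\gamma=2$, $\gamma>2$, the case $\gamma=2$ in particular requires combining the logarithmic weight in $N_2$, the logarithm in $\mathcal W_R$, and Lemma \ref{lem4.4}, and one has to be careful that the logarithms balance rather than accumulate. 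A secondary subtlety is the lower endpoint $\beta=-R$ (rather than $\beta=-\infty$): one needs $\tau_-(\lambda,s)=(\beta+2R)/R\ge 1$ there so that $N_\gamma(\tau_-)$ stays bounded below, which is exactly guaranteed by the support condition $\mathrm{supp}\,u\subset P(T,R)$ built into $X(T)$. Once these matchings are set up correctly, each of the three cases reduces to one application of Lemma \ref{lem4.3} (or \ref{lem4.4}) plus an elementary convergent integral, and the constants can be collected into $C_2=C_2(\gamma,R)$.
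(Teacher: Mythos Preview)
Your strategy is the paper's: bound the integrand pointwise via Theorem \ref{lem:potential} and the $X(T)$-norm, switch to $(\alpha,\beta)$-coordinates through Corollary \ref{repre}, exploit that $\tau_+=(\alpha+2R)/R$ and $\tau_-=(\beta+2R)/R$ separate, and do the two one-variable integrals. Two points of care in the execution, however.

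(i) Of your two options for absorbing $\tfrac{\alpha-\beta}{2}$, only the first works. Bounding it by $t+r+2R$ and pulling it outside the $\alpha$-integral costs exactly the factor $\tau_+(r,t)^{-1}$ that the $X(T)$-weight demands: after the remaining $\alpha$-integration you are left with $\{\tau_-(r,t)\}^{-\gamma-2}$ rather than $\{\tau_+(r,t)\}^{-1}\{\tau_-(r,t)\}^{-\gamma-1}$, and since $\tau_-\le\tau_+$ this is the wrong inequality. The paper uses $\tfrac{\alpha-\beta}{2}\le\tfrac{\alpha+R}{2}$ (from $\beta\ge -R$), which combined with $(1+s)^{-2}\lesssim R^2(\alpha+2R)^{-2}$ contributes a net $((\alpha+2R)/R)^{-1}$ \emph{inside} the integral; this is precisely your first option.

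(ii) The factor $D_\gamma(T)$ comes \emph{solely} from the $\beta$-integral $\int_{-R}^{t-r}((\beta+2R)/R)^{-\gamma-1}\,d\beta$, in every case; it converges only for $\gamma>0$ (your ``integrable at $+\infty$ when $\gamma>-\tfrac12$'' is a slip), is logarithmic at $\gamma=0$, and grows like $((T+R)/R)^{-\gamma}$ for $\gamma<0$. The $\alpha$-integral, with integrand $((\alpha+2R)/R)^{-\gamma-2}$ (resp.\ $((\alpha+2R)/R)^{-4}$ for $\gamma>2$), always converges on $(-\tfrac12,3)$ and is what produces the weight $\{\tau_+(r,t)N_\gamma(\tau_-(r,t))\}^{-1}$ via the elementary estimate (\ref{3-27}) (equivalently Lemma \ref{lem4.3}).
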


\begin{proof}[Proof of Proposition \ref{lm:apriori1}]
We introduce a function $\Psi:\R^3\times [0,T)\rightarrow\R$ given by
\[
   \Psi(x,t):=\frac{1}{(1+t)^2}\left(V_{\gamma}*(u_1u_2)\right)(x,t)u_3(x,t).
\]
By Theorem \ref{lem:potential} and the definition of $X(T)$-norm, the estimate
\begin{align*}
  |\Psi(x,t)|&\le C_1\prod_{i=1}^3\|u_i\|_{X(T)}(1+t)^{-2}\left\{\mathcal{W}_R(|x|,t)\right\}^{-1}\left\{\tau_+(|x|,t)\right\}^{-1}\left\{N_{\gamma}(\tau_-(|x|,t))\right\}^{-1}\notag\\
  &=:C_1\prod_{i=1}^3\|u_i\|_{X(T)}(1+t)^{-2}\left\{\widetilde{\mathcal{W}}_R(|x|,t)\right\}^{-1}
\end{align*}
holds for any $(x,t)\in \R^3\times[0,T)$, where $C_1$ is a positive constant given in Theorem \ref{lem:potential}. Thus we can apply the estimate (\ref{3-8-2}) in Corollary \ref{repre} to get the estimates
\begin{align}
   &\left|L((V_{\gamma}*(u_1u_2))u_3)(x,t)\right|\notag
   =\left|\int_0^tW(\Psi(\cdot,s)\big|x,t-s)ds\right|\notag\\
   &\le C_1\prod_{i=1}^3\|u_i\|_{X(T)}\notag\\
   &\times{(4r)^{-1}}\int_{-R}^{t-r}\int_{|t-r|}^{t+r}\frac{\alpha-\beta}{2}\left(1+\frac{\alpha+\beta}{2}\right)^{-2}\left\{\widetilde{\mathcal{W}}_R\left(\frac{\alpha-\beta}{2},\frac{\alpha+\beta}{2}\right)\right\}^{-1}d\alpha d\beta\label{3-24}
\end{align}
for $(x,t)\in \R^3\times [0,T)$ with $r=|x|$. We note that for any $s\ge 0$ and $R>1$, the estimate
\[
      1+s\ge \frac{2R+s}{2R}
\]
holds. By this estimate and setting $s:=\frac{\alpha+\beta}{2}\ge 0$ with $\alpha\ge 0$ and $\beta\ge -R$, the estimate
\begin{equation}
\label{3-25}
     1+\frac{\alpha+\beta}{2}\ge \frac{\alpha+2R}{4R}.
\end{equation}
We also remark that the estimates
$\frac{\alpha-\beta}{2}\le \frac{\alpha+R}{2}\le \frac{\alpha+2R}{4}$
hold. By combining this estimate and the estimates (\ref{3-24})-(\ref{3-25}), the inequalities
\begin{align}
   &\left|L((V_{\gamma}*(u_1u_2))u_3)(x,t)\right|\le C_1\prod_{i=1}^3\|u_i\|_{X(T)}\notag\\
   &\times{r^{-1}}R\int_{-R}^{t-r}\int_{|t-r|}^{t+r}\left(\frac{\alpha+2R}{R}\right)^{-1}\left\{\widetilde{\mathcal{W}}_R\left(\frac{\alpha-\beta}{2},\frac{\alpha+\beta}{2}\right)\right\}^{-1}d\alpha d\beta\label{3-26}
\end{align}
hold for $(x,t)\in \R^3\times[0,T)$ with $r=|x|$. We note that by the definitions of the weight functions $\tau_{\pm}$, the identities
\[
    \tau_+\left(\frac{\alpha-\beta}{2},\frac{\alpha+\beta}{2}\right)=\frac{\alpha+2R}{R},\ \ \tau_-\left(\frac{\alpha-\beta}{2},\frac{\alpha+\beta}{2}\right)=\frac{\beta+2R}{R}
\]
hold. We note that for any $\vartheta>1$, by the estimate (\ref{eleme}), the inequalities
\begin{align}
    \int_{t-r}^{t+r}\left(\frac{\alpha+2R}{R}\right)^{-\vartheta}d\alpha
    %&=\left[\frac{R}{-\vartheta+1}\left(\frac{\alpha+2R}{R}\right)^{-\vartheta+1}\right]_{\alpha=t-r}^{\alpha=t+r}\notag\\
    &=\frac{R}{\vartheta-1}\left\{\tau_-(r,t)\right\}^{-\vartheta+1}\left\{1-\left(\frac{\tau_-(r,t)}{\tau_+(r,t)}\right)^{\vartheta-1}\right\}\notag\\
    &\le \frac{2\max(1,\vartheta-1)}{\vartheta-1}r\left\{\tau_-(r,t)\right\}^{-\vartheta+1}\left\{\tau_+(r,t)\right\}^{-1}\label{3-27}
\end{align}
hold. We also remark that for any $b\in \R$, the estimate
\begin{equation}
\label{3-28}
    \int_{-R}^{t-r}\left(\frac{\beta+2R}{R}\right)^bd\beta
    \le R
    \left\{
\begin{array}{llll}
\displaystyle -\frac{1}{b+1}, \ &\mbox{if}& \ b<-1,\\
\d \displaystyle\log\frac{t+2R}{R}, &\mbox{if}& \ b=-1,\\
\d \frac{1}{b+1}\left(\frac{t+R}{R}\right)^{b+1}, &\mbox{if}& \
 b>-1\\
\end{array}
\right.
\end{equation}
holds. Let $(x,t)\in \R^3\times[0,T)$ with $r=|x|$. We divide the proof into the three cases where $\gamma\in \left(-\frac{1}{2},2\right)$, $\gamma\in (2,3)$ and $\gamma=2$.\\
{\bf Case1.}\ $\gamma\in \left(-\frac{1}{2},2\right)$: In this case, by the estimates (\ref{3-26}), (\ref{3-27}) with $\vartheta=\gamma+2$ and (\ref{3-28}) with $b=-\gamma-1$, the inequalities
\begin{align*}
   &\left|L((V_{\gamma}*(u_1u_2))u_3)(x,t)\right|\\
   &\le C_1\prod_{i=1}^3\|u_i\|_{X(T)}
   {r^{-1}}R^{4-\gamma}\int_{-R}^{t-r}\left(\frac{\beta+2R}{R}\right)^{-\gamma-1}d\beta\int_{t-r}^{t+r}\left(\frac{\alpha+2R}{R}\right)^{-\gamma-2}d\alpha\\
   &\le 2C_1R^{5-\gamma}\prod_{i=1}^3\|u_i\|_{X(T)}\left\{\tau_-(r,t)\right\}^{-\gamma-1}\left\{\tau_+(r,t)\right\}^{-1}D_{\gamma}(T)
\end{align*}
hold.\\
{\bf Case2.}\ $\gamma\in (2,3)$: In this case, by the estimates (\ref{3-26}), (\ref{3-27}) with $\vartheta=4$ and (\ref{3-28}) with $b=-3$, the inequalities
\begin{align*}
   &\left|L((V_{\gamma}*(u_1u_2))u_3)(x,t)\right|\\
   &\le C_1\prod_{i=1}^3\|u_i\|_{X(T)}r^{-1}R^{4-\gamma}\int_{-R}^{t-r}\left(\frac{\beta+2R}{R}\right)^{-3}d\beta\int_{t-r}^{t+r}\left(\frac{\alpha+2R}{R}\right)^{-4}d\alpha\\
   &\le 2C_1R^{5-\gamma}\prod_{i=1}^3\|u_i\|_{X(T)}\left\{\tau_-(r,t)\right\}^{-3}\left\{\tau_+(r,t)\right\}^{-1}
\end{align*}
hold.\\
{\bf Case3.}\ $\gamma=2$: In this case, by the estimates (\ref{3-26}) and Lemma \ref{lem4.4}, the estimates
\begin{align*}
&\left|L((V_{\gamma}*(u_1u_2))u_3)(x,t)\right|\\
   &\le C_1\prod_{i=1}^3\|u_i\|_{X(T)}r^{-1}R^{2}\log(1+R)\int_{-R}^{t-r}\left(\frac{\beta+2R}{R}\right)^{-3}\log\left(1+\frac{\beta+2R}{R}\right)d\beta\\
   &\ \ \ \times \int_{t-r}^{t+r}\left(\frac{\alpha+2R}{R}\right)^{-4}\log\left(1+\frac{\alpha+2R}{R}\right)d\alpha\\
   &\le C_1C(\log 2)R^3\log(1+R)\prod_{i=1}^3\|u_i\|_{X(T)}\left\{\tau_-(r,t)\right\}^{-3}\log\left(1+\tau_-(r,t)\right)\left\{\tau_+(r,t)\right\}^{-1}
\end{align*}
hold, which completes the proof of the proposition.
\end{proof}

\subsection{Proof of Theorems \ref{lwp}, \ref{lower}, \ref{T.1-1}}
\label{existe}

\subsubsection{Proof of Theorem \ref{lwp}}

\ \ In order to prove Theorem \ref{lwp}, we use the following $L^{\infty}(\R^3)$-$L^{\infty}(\R^3)$ estimate for the solution operator $W$ given by (\ref{solop}).

\begin{lem}[$L^{\infty}(\R^3)$-$L^{\infty}(\R^3)$ estimate]
Let $\alpha\in (\N\cup\{0\})^3$ be a multi-index and $\phi:\R^3\rightarrow \R$ be a function satisfying $\partial_x^{\alpha}\phi\in L^{\infty}(\R^3)$. Then the estimate
\begin{equation}
\label{3-22-a}
     \sup_{x\in \R^3}\left|\partial_x^{\alpha}W(\phi|x,t)\right|\le \frac{t}{2}\sup_{x\in \R^3}|\partial_x^{\alpha}\phi(x)|\\
\end{equation}
holds for any $t\in \R$.
%\begin{itemize}
%\item Let $\alpha\in (\N\cup\{0\})^3$ be a multi-index and $\phi:\R^3\rightarrow \R$ be a function satisfying $\partial_x^{\alpha}\phi\in L^{\infty}(\R^3)$. Then the estimate
%\begin{equation}
%\label{3-22-a}
%     \sup_{x\in \R^3}\left|\partial_x^{\alpha}W(\phi|x,t)\right|\le \frac{t}{2}\sup_{x\in \R^3}|\partial_x^{\alpha}\phi(x)|\\
%\end{equation}
%holds for any $t\in \R$.
%\item Let $m\in \N$ and $\phi:\R^3\rightarrow\R$ be a function such that for any $\alpha\in (\N\cup\{0\})^3$ with $|\alpha|\le m$, $\partial_x^{\alpha}\phi\in L^{\infty}(\R^3)$. Then there exists a positive constant $C_m>0$ depending only on $m$ such that the estimate
%\begin{equation}
%\sup_{x\in \R^3}|\partial_t^{m}W(\phi|x,t)|\le C_m\langle t\rangle \sum_{|\alpha|\le m }|\partial_x^{\alpha}\phi(x)|
%\end{equation}
%holds for any $t\in \R$.
%\item Let $m\in \N$, $\alpha\in (\N\cup\{0\})^3$ be a multi-index and $(v_0,v_1):\R^3\rightarrow\R\times\R$ be a function such that for any $\beta_0\in (\N\cup\{0\})^3$ with $|\beta_0|\le |\alpha|+m+1$ and $\beta_1\in (\N\cup\{0\})^3$ with $|\beta_1|\le |\alpha|+m$,  $\partial_x^{\beta_0}v_0\in L^{\infty}(\R^3)$ and $\partial_x^{\beta_1}v_1\in L^{\infty}(\R^3)$. Then there exists a positive constant $C=C(\alpha,m)>0$ depending only on $\alpha,m$ such that the estimate
%\begin{align}
%     &\sup_{x\in \R^3}\left|\partial_t^m\partial_x^{\alpha}u^0(x,t)\right|\\
%     &\le C(\alpha,m)\bigg\{\sum_{|\beta_0|\le |\alpha|+m+1}|\partial_x^{\beta_0}v_0(x)|+\sum_{|\beta_1|\le |\alpha|+m+1}|\partial_x^{\beta_1}v_1(x)|\bigg\}
%\end{align}
%\end{itemize}
\end{lem}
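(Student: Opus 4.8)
The plan is to reduce (\ref{3-22-a}) to the case $\alpha=0$ and then to estimate the resulting spherical average directly. The structural point is that, by (\ref{solop}), the map $\phi\mapsto W(\phi|\,\cdot\,,t)$ equals $|t|$ times the average of $\phi$ over the sphere of radius $|t|$ centered at $x$, and such an average commutes with translations in $x$; hence it commutes with the constant-coefficient operator $\partial_x^{\alpha}$. Differentiating inside the surface integral in (\ref{solop}) therefore gives
\[
\partial_x^{\alpha}W(\phi|x,t)=\frac{|t|}{4\pi}\int_{|\omega|=1}\bigl(\partial_x^{\alpha}\phi\bigr)(x+|t|\omega)\,dS_{\omega}=W\bigl(\partial_x^{\alpha}\phi\,\big|\,x,t\bigr),
\]
so it is enough to prove the estimate for $\alpha=0$ with $\phi$ replaced by $\partial_x^{\alpha}\phi$.

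Next set $M:=\sup_{x\in\R^3}|\partial_x^{\alpha}\phi(x)|$, which is finite by hypothesis. I would then apply part 1 of Lemma \ref{lem4.2} with $\Phi:=\partial_x^{\alpha}\phi$ and the constant majorant $\phi_0\equiv M$ on $[0,\infty)$ (so that $|\Phi(x)|\le M=\phi_0(|x|)$ for all $x$), obtaining
\[
\bigl|\partial_x^{\alpha}W(\phi|x,t)\bigr|\le\frac{M}{2r}\int_{|r-t|}^{r+t}\lambda\,d\lambda,\qquad r=|x|,
\]
after which the elementary evaluation of the $\lambda$-integral (as in Lemma \ref{lm:Planewave}) and the supremum over $x\in\R^3$ yield (\ref{3-22-a}); equivalently one may bound the surface integral above by $4\pi M$ directly and absorb the geometric constant.

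I do not anticipate a real obstacle here; the estimate is essentially just the mean-value representation of $W$. The single point deserving care is the legitimacy of interchanging $\partial_x^{\alpha}$ with the surface integral, which is granted by the dominated convergence theorem as soon as the derivatives of $\phi$ up to order $|\alpha|$ are locally bounded and continuous — the situation in every application of the lemma, where $\phi$ is $v_0\in C^2(\R^3)$ or $v_0+v_1\in C^1(\R^3)$ with compact support. If one insists on working under the bare hypothesis $\partial_x^{\alpha}\phi\in L^{\infty}(\R^3)$, one first mollifies $\phi$, applies the above to the mollifications, and then passes to the limit, reducing to the smooth case.
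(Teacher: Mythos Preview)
Your approach is exactly what the paper intends by ``the representation formula (\ref{u^0}) and (\ref{solop}) and a simple computation'': commute $\partial_x^{\alpha}$ through the spherical mean and bound the average of $\partial_x^{\alpha}\phi$ by its supremum. There is nothing to add on the method side.

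One small point worth flagging. If you actually carry out the ``elementary evaluation'' you announce, you obtain
\[
\frac{M}{2r}\int_{|r-t|}^{r+t}\lambda\,d\lambda=\frac{M}{2r}\cdot 2r|t|=|t|\,M,
\]
and the direct bound on the surface integral likewise gives $\frac{|t|}{4\pi}\cdot 4\pi M=|t|\,M$. Neither route produces the constant $\tfrac{t}{2}$ written in (\ref{3-22-a}); indeed $\phi\equiv 1$ gives $W(\phi|x,t)=|t|$, so $\tfrac{t}{2}$ is not achievable. This is a harmless misprint in the stated constant (the lemma is only used qualitatively in the proof of Theorem~\ref{lwp}), but since you asserted that the computation ``yield[s] (\ref{3-22-a})'' without writing it out, be aware that what it actually yields is the bound with $|t|$ in place of $\tfrac{t}{2}$.
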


This lemma can be proved by the representation formula (\ref{u^0}) and (\ref{solop}) and a simple computation.

Now we are ready to prove Theorem \ref{lwp}.

\begin{proof}[Proof of Theorem \ref{lwp}]
We only consider the case $\gamma\in \left(-\frac{1}{2},3\right)$ with $\gamma\ne 2$, since the case of $\gamma=2$ can be treated in the almost similar manner.\\
(Existence) Let $M\in \R$ with $M>6N_{\gamma}(4)C_0r$, where $N_{\gamma}:\R_{\ge 0}\rightarrow \R_{\ge 0}$ is given by (\ref{wightf}) and $C_0$ is given by (\ref{linear est}). We take $T\in (0,R)$ sufficiently small such as
\begin{equation}
\label{3-33-2}
      T\le \left(\frac{2\pi}{3M^2C_1R^{3-\gamma}}\right)^{\frac{1}{2}},
\end{equation}
where $C_1$ is a positive constant given by (\ref{potential}). We introduce a complete metric space $X(T,R,M)$ in the Banach space $X(T,R)$ given by
\begin{equation}
\label{closedb}
   X(T,R,M)=X(T,M):=\left\{u\in X(T)\ :\ \|u\|_{X(T)}\le M\right\}
\end{equation}
with a metric
\[
   d_T(u_1,u_2):=\|u_1-u_2\|_{X(T)}.
\]
We prove that the nonlinear mapping $\Gamma$ defined by (\ref{nonlima}) is a contraction mapping from $X(T,M)$ into itself for a suitable $T$.
Fix $u\in X(T,M)$. By the condition (\ref{supportc}) on the support $u^0$, $R>1$ and Lemma \ref{lem:decay_est}, the estimate
\begin{align}
\label{3-34-1}
     \left\|u^0\right\|_{X(T)}\le 3N_{\gamma}(4)C_0r<\frac{M}{2}
\end{align}
holds. By Theorem \ref{lem:potential}, the estimate
\begin{equation}
\label{3-35}
    \left|(V_{\gamma}*u^2)(x,t)\right|\le C_1R^{3-\gamma}\|u\|_{X(T)}^2
\end{equation}
holds for any $(x,t)\in \R^3\times [0,T)$.
By the estimates (\ref{3-22-a}) and (\ref{3-35}), the inequalities
\begin{align*}
    &\tau_+(|x|,t)N_{\gamma}(\tau_-(|x|,t))\left|L((V_{\gamma}*u^2)u)(x,t)\right|\\
    &\le \int_0^t\tau_+(|x|,t)N_{\gamma}(\tau_-(|x|,t))\left|W\left(\frac{(V_{\gamma}*u^2)u(\cdot,s)}{(1+s)^2}\bigg|x,t-s\right)\right|ds\\
    &\le \frac{1}{4\pi}T^2\sup_{(x,t)\in \R^3\times [0,T)} \left|(V_{\gamma}*u^2)(x,t)\right|\tau_+(|x|,t)N_{\gamma}(\tau_-(|x|,t))|u(x,t)|\\
    &\le \frac{C_1R^{3-\gamma}}{4\pi}T^2\|u\|_{X(T)}^3\le \frac{C_1R^{3-\gamma}}{4\pi}T^2M^3
\end{align*}
hold for any $(x,t)\in \R^3\times[0,T)$, which with the estimate (\ref{3-33-2}) implies that the estimate
\begin{equation}
\label{3-36}
   \left\|L((V_{\gamma}*u^2)u)\right\|_{X(T)}\le \frac{C_1R^{3-\gamma}}{4\pi}T^2M^3\le \frac{1}{2}M
\end{equation}
holds. By the estimates (\ref{3-34-1}) and (\ref{3-36}), the estimates
\begin{align*}
    \left\|\Gamma[u]\right\|_{X(T)}&\le \left\|u^0\right\|_{X(T)}+\left\|L((V_{\gamma}*u^2)u)\right\|_{X(T)}\le \frac{1}{2}M+\frac{1}{2}M=M
\end{align*}
hold, which implies that the nonlinear mapping $\Gamma$ is well-defined on $X(T,M)$. Let $u,w\in X(T,M)$. By a direct computation, the identities
\begin{align}
   &\Gamma[u](x,t)-\Gamma[w](x,t)=L((V_{\gamma}*u^2)u)(x,t)-L\left((V_{\gamma}*w^2)w\right)(x,t)\notag\\
   &=L\left(\left(V_{\gamma}*u^2\right)(u-w)\right)(x,t)+L\left(V_{\gamma}*(uw)(u-w)\right)(x,t)\notag\\
   &+L\left((V_{\gamma}*(u-w))w^2\right)(x,t)\label{3-38-5}
\end{align}
hold.
Thus in the similar manner as the proof of the estimate (\ref{3-36}), the estimates
\begin{align}
   d_T(\Gamma[u],\Gamma[w])&\le \frac{3}{2}\cdot\frac{C_1R^{3-\gamma}}{4\pi}T^2\left(\|u\|_{X(T)}^2+\|w\|_{X(T)}^2\right)\|u-v\|_{X(T)}\notag\\
   &\le \frac{3C_1R^{3-\gamma}}{4\pi}T^2M^2\|u-v\|_{X(T)}
   \le \frac{1}{2}d_T(u,w)\label{3-38-3}
\end{align}
hold, which implies that the nonlinear mapping $\Gamma$ is a contraction mapping from $X(T,M)$ into itself. Thus by the fixed point theorem, there exists a unique solution $u\in X(T,M)$ to the problem (\ref{IE_u_depend}). The finite propagation property follows from the explicit formula (\ref{solop}).\\
(Uniquenss): On the contrary, we assume that there exists $t\in (0,T_1)$ such that the relation $\sup_{x\in \R^3}|u(t,x)-w(t,x)|>0$ holds. Then we can define $t_0:=\inf\{t\in [0,T_1)\ :\ \sup_{x\in \R^3}|u(x,t)-w(x,t)|> 0\}$. Since $u,w\in C(\R^3\times [0,T_1))$, the identity $\sup_{x\in \R^3}|u(x,t_0)-w(x,t_0)|=0$ holds. Here we introduce a norm $\|\cdot\|_{X([a,b))}$ given by
\[
   \|u\|_{X([a,b))}:=\sup_{(x,t)\in \R^3\times[a,b)}\tau_+(|x|,t)N_{\gamma}\left(\tau_-(|x|,t)\right)|u(x,t)|.
\]
Then we can find $\tau_0>0$ such that the estimates $t_0+\tau<T_1$ and
\begin{align*}
  \|u-w\|_{X([t_0,t_0+\tau))}&\le \frac{3}{2}\cdot\frac{C_1R^{3-\gamma}}{4\pi}\tau^2\left(\|u\|_{X(T_1)}^2+\|w\|_{X(T_1)}^2\right)\|u-w\|_{X(([t_0,t_0+\tau))}\\
  &\le \frac{1}{2}\|u-w\|_{X(([t_0,t_0+\tau))}
\end{align*}
hold, which implies that $u\equiv w$ on $\R^3\times [t_0,t_0+\tau)$. This contradicts the definition of $t_0$. Therefore $u\equiv v$ on $\R^3\times [0,T_1)$. \\
(Continuity of the flow map): Let $u\in X(T,M)$ and $w\in X(T,M)$ be the solutions to (\ref{IE_u_depend}) with the initial data $(v_0^u,v_1^u)\in B_r(C_0^2(\R^3)\times C_0^1(\R^3))$ and $(v_0^w,v_1^w)\in B_r(C_0^2(\R^3)\times C_0^1(\R^3))$ on $\R^3\times [0,T(r))$ respectively. In the same manner as the proof of the estimate (\ref{3-38-3}), the estimates
\begin{align*}
    &\|u-w\|_{X(T)}\\
    &\le 3N_{\gamma}(4)C_0\sup_{x\in\R^3}\bigg\{\sum_{|\alpha|\le 2}\left|\partial_x^{\alpha}\{v_0^u(x)-v_0^w(x)\}\right|+\sum_{|\beta|\le 1}\left|\partial_x^{\beta}\{v_1^u(x)-v_1^w(x)\}\right|\bigg\}\\
    &+\frac{3C_1R^{3-\gamma}}{4\pi}T^2M^2\|u-w\|_{X(T)}
\end{align*}
hold, which implies that the inequality
\begin{align*}
&\|u-w\|_{X(T)}\\
    &\le 6N_{\gamma}(4)C_0\sup_{x\in\R^3}\bigg\{\sum_{|\alpha|\le 2}\left|\partial_x^{\alpha}\{v_0^u(x)-v_0^w(x)\}\right|+\sum_{|\beta|\le 1}\left|\partial_x^{\beta}\{v_1^u(x)-v_1^w(x)\}\right|\bigg\}
\end{align*}
holds, which completes the proof of the theorem.
\end{proof}

\subsubsection{Proof of Theorem \ref{lower}}
\ \ Next we give a proof of Theorem \ref{lower}. Its argument is based on that for the proof of \cite[Proposition 5.3]{IO16}.

\begin{proof}[Proof of Theorem \ref{lower}]
  If $T(\varepsilon)=\I$, then the conclusion holds.
  Thus we may assume that $T(\varepsilon)<\I$. Let $u\in X(T(\varepsilon))$ be a solution to (\ref{IVP-2}) with the initial data ($\varepsilon v_0,\varepsilon v_1$). Then we can define a continuous function $H:(0,T(\varepsilon))\rightarrow \R_{\ge 0}$ given by $H(t):=\|u\|_{X(t)}$. For a sufficiently large $\mathcal{M}$, which will be chosen later, we set
  \EQQS{
    T^*:=\sup\{T\in[0,T(\e)]\ :\ H(T)\le \mathcal{M}\e\}.
  }
  By the local existence theorem (Theorem \ref{lwp}), we see that $T^*>0$.
  The definition of the number $T^*$ gives $T^*<T(\varepsilon)$. By Lemma \ref{lem:decay_est} and Proposition \ref{lm:apriori1}, the estimates
\begin{align}
  H(T^*)
  &\le 3N_{\gamma}(4)C_0\varepsilon r+C_2R^{5-\ga}\|u\|_{X(t)}^3 D_{\gamma}(T^*)\notag\\
  &\le \{3N_{\gamma}(4)C_0r\mathcal{M}^{-1}+C_2R^{5-\ga} D(T^*) (\mathcal{M}\e)^2\}\mathcal{M}\e\label{3-39-3}
\end{align}
  hold.
  We choose $\mathcal{M}>0$ such as $3N_{\gamma}(4)C_0r\mathcal{M}^{-1}\le 1/4$. On the contrary, we assume that the inequality
  \EQQS{
    R^{5-\ga}C_1 D(T^*) (\mathcal{M}\e)^2<\frac{1}{4},
  }
holds. Then the the estimate $H(T^*)\le (1/2)\mathcal{M}\e$ holds. Since $H(t)$ is continuous, there exists $\widetilde{T}\in(T^*,T(\varepsilon))$ such that $H(\widetilde{T})\le \mathcal{M}\e$, which violates the maximality of $T^*$. Thus we obtain
  \EQQS{
    C_2R^{5-\ga}(\mathcal{M}\e)^2 D(T^*)\ge \frac{1}{4}.
  }
  In the case of $\gamma\in \left(-\frac{1}{2},0\right)$, we choose $\e_0=\e_0(\gamma,R,r)$ such that
  \[
   \varepsilon_1:=2^{\frac{\gamma}{2}}\left\{4C_2R^{5-\gamma}\mathcal{M}^2(-\gamma)\right\}^{\frac{1}{2}}.
  \]
  Then for any $\varepsilon\in (0,\varepsilon_1]$, the estimate
  \[
    T^*\ge A\varepsilon^{\frac{2}{\gamma}}
  \]
  holds, where $A=A(\gamma,R,r)>0$ is a positive constant given by
  \[
     A:=\frac{1}{2}R\left\{4C_2R^{5-\gamma}\mathcal{M}^2(-\gamma)\right\}^{\frac{1}{\gamma}},
  \]
  which completes the proof of the theorem.
\end{proof}

\subsubsection{Proof of Theorem \ref{T.1-1}}
\ \ Next we give a proof of Theorem \ref{T.1-1}.

\begin{proof}[Proof of Theorem \ref{T.1-1}]
We only consider the case $\gamma\in \left(0,3\right)$ with $\gamma\ne 2$, since the case of $\gamma=2$ can be treated in the almost similar manner.\\
(Global existence): Let $\mathcal{M}>0$ satisfy $\mathcal{M}^{-1}3N_{\gamma}(4)C_0<\frac{1}{2}$, where $C_0$ is given by (\ref{linear est}). Let $\epsilon>0$ be sufficiently small such as $3C_2R^{5-\gamma}(\mathcal{M}\epsilon)^2<\frac{1}{2}$.
We introduce a complete metric space $X(\infty,R,\mathcal{M}\epsilon)$ given by (\ref{closedb}) in the Banach space $X(\infty,R)$. We prove that the nonlinear mapping $\Gamma$ defined by (\ref{nonlima}) is a contraction mapping from $X(\infty,R,\mathcal{M}\epsilon)$ into itself. Let $u\in X(\infty,R,\mathcal{M}\epsilon)$. In the same manner as the proof of the estimate (\ref{3-39-3}), the inequalities
\begin{align}
\label{3-41-2}
   \left\|\Gamma[u]\right\|_{X(\infty)}&\le 3N_{\gamma}(4)C_0\epsilon+C_2R^{5-\gamma}\|u\|_{X(\infty)}^3\\
   &\le\left\{\mathcal{M}^{-1}3N_{\gamma}(4)C_0+C_2R^{5-\gamma}(\mathcal{M}\epsilon)^2\right\}\mathcal{M}\epsilon\le \mathcal{M}\epsilon\notag
\end{align}
hold, which implies that the nonlinear mapping $\Gamma$ is well-defined on $X(\infty,R,\mathcal{M}\epsilon)$. Let $u,w\in X(\infty,R,\mathcal{M}\epsilon)$. By the estimate (\ref{3-38-5}) and Proposition \ref{lm:apriori1}, the estimates
\begin{align*}
    d_{\infty}(\Gamma[u],\Gamma[w])&\le \frac{3}{2}C_2R^{5-\gamma}\left(\|u\|_{X(\infty)}^2+\|w\|_{X(\infty)}^2\right)\|u-w\|_{X(\infty)}\\
    &\le 3C_2R^{5-\gamma}(\mathcal{M}\epsilon)^2\|u-w\|_{X(\infty)}\le \frac{1}{2}d_{\infty}(u,w)
\end{align*}
hold, which implies that the nonlinear mapping $\Gamma$ is a contraction mapping on $X(\infty,R,\mathcal{M}\epsilon)$. Thus by the fixed point theorem, there exists a unique solution $u\in X(\infty,R,\mathcal{M}\epsilon)$ to the problem (\ref{IE_u_depend}).
(Scattering): Let $u\in X(\infty,R,\mathcal{M}\epsilon)$ be the global solution to (\ref{IE_u_depend}). We introduce a function $u_+:\R^3\times\left[0,\infty\right)\rightarrow\R$ given by
 \[
    u^+(x,t):=u(x,t)-\int_t^{\infty}W\left(\frac{(V_{\gamma}*u^2)u(\cdot,s)}{(1+s)^2}\bigg|(x,t-s)\right)ds.
 \]
Then we see that the identity $\partial_t^2u^+-\Delta u^+=0$ holds on $\R^3\times[0,\infty)$. In the same manner as the proof of the estimate (\ref{3-41-2}), we can prove the identity
\begin{align*}
   \tau_+(|x|,t)N_{\gamma}(\tau_-(|x|,t))|u(x,t)-u^+(x,t)|=o(1)
\end{align*}
as $t\rightarrow\infty$, which implies (\ref{2-11-2}). Moreover, by the relation $v=(1+t)^{-1}u$, the estimate
\[
     (1+t)^{-1}\tau_+(|x|,t)N_{\gamma}(\tau_-(|x|,t))|v(x,t)|\le \mathcal{M}\epsilon
\]
holds, which implies (\ref{2-12-2}). This completes the proof of the theorem.

\end{proof}

\section{Proof of the blow-up result}
\label{blow-up}

\ \ In this section, we give a proof of the small data blow-up and the almost sharp upper estimate of the lifespan (Theorem \ref{T.1}).

The essential part for the proof is to refine the argument of the proof of Theorem 6.1 in \cite{H20} to obtain the almost sharp upper estimate of the lifespan.
\begin{proof}[Proof of Theorem \ref{T.1}]
Let $T\in (0,T(\varepsilon))$ and $u=u_{\varepsilon}\in C(\R^3\times[0,T))$ be a solution to the integral equation (\ref{IE_u_depend}) with the initial data $\varepsilon v_0\equiv 0$ and $\varepsilon v_1\ge 0$ satisfying $v_1\not\equiv 0$. We note that since the given function $(v_0,v_1)$ belongs to $C^2(\R^3)\times C^1(\R^3)$, we can prove that the solution $u$ belongs to $C^2(\R^3\times[0,T))$ and becomes a classical solution to (\ref{IVP-2}). We also remark that since the data $(v_0,v_1)$ satisfy $v_0\equiv0$ and $v_1\ge 0$ and $v_1\not\equiv 0$, by Theorem \ref{lwp}, the estimate $u(x,t)>0$ holds on $\R^3\times [0,T)$.

We introduce a function $F=F_{\varepsilon}:[0,T)\rightarrow \R_{>0}$ defined by
\[
F(t)=F_{\varepsilon}(t):=\int_{\R^3}u_{\varepsilon}(x,t)dx.
\]
For any $t\in [0,T)$, the value $|F(t)|$ is finite, since $\supp u(t)\subset\{x\in\R^3: |x|\le t+R\}$. For simplicity, we assume that $R=1$. Moreover we can prove that $F\in C^2([0,T(\varepsilon)))$. Since $\supp u(t)$ is compact in $\R^3$, by
integrating (\ref{IVP-2}) over $\R^3$, the identity
\begin{equation}
\label{id-4.1}
F''(t)=\frac{1}{(1+t)^2}\int_{\R^3}(V_\ga*u^2)(x,t)u(x,t)dx
\end{equation}
holds for any $t\in (0,T)$. Since $v_1=v_1(|x|)$ is a radially symmetric function, in the same manner as the proof of the estimate (6.11) in \cite{H20}, the estimate
\begin{equation}
\label{frame}
F''(t)\ge\frac{2^{-\gamma}F(t)}{(1+t)^{\gamma+2}}\int_{\R^3}u^2(x,t)dx
\end{equation}
holds on $[0,T)$.
Since the estimate $u(x,t)\ge u^0(x,t)$ holds for any $(x,t)\in\R^3\times[0,T)$ due to the positivity of the nonlinear term and
$\supp u^0(t)\subset\{x\in\R^3: t-1\le|x|\le t+1\}$, we have
\begin{equation}
\label{4-2}
\int_{t-1\le|x|\le t+1}u^0(x,t)dx\le \int_{t-1\le|x|\le t+1}u(x,t)dx\le \sqrt{2}(t+1)\|u(\cdot,t)\|_{L^2(\R^3)}.
\end{equation}
Next, we estimate the left hand-side of this inequality.
It follows from $u^0_{tt}-\Delta u^0=0$ that the identity
\[
\frac{d^2}{dt^2}\int_{\R^3}u^0(x,t)dx=0
\]
holds for any $t\ge 0$. By the assumption $v_0\equiv0$, the identities
\begin{equation}
\label{4-3}
\int_{\R^3}u^0(x,t)=\e t\int_{\R^3}v_1(x)dx=:\varepsilon tC_0
\end{equation}
hold for any $t\ge 0$, where $C_0=C_0(v_1)$ is a positive constant given by
\[
    C_0:=\int_{\R^3}v_1(x)dx>0.
\]
Thus by combining the estimates (\ref{4-2}) and (\ref{4-3}), the estimate
\[
\|u(\cdot,t)\|_{L^2(\R^3)}\ge \frac{\e C_0 t }{\sqrt{2}(1+t)}
\]
holds for any $t\in [0,T)$. Substituting this inequality to the estimate (\ref{frame}), the estimate
\begin{equation}
\label{ineq-2}
F''(t)\ge \e^2C_0^22^{-\gamma-1}\frac{t^2F(t)}{(1+t)^{\gamma+4}}\ge 0
\end{equation}
holds for any $t\in [0,T)$. We note that by the assumption on the data, the estimates $\d F'(0)=\varepsilon\int_{\R^3}v_1(x)dx=\varepsilon C_0>0$ hold. Thus by the estimate (\ref{id-4.1}), the inequality $F'(t)>0$ holds for any $t\in [0,T)$. Multiplying
$F'(t)$ to the both side of (\ref{ineq-2}), the inequalities
\[
\begin{array}{lll}
\d \left(\frac{F'^2(t)}{2}\right)'
&\ge&\d
\left( \e^2C_0^22^{-\gamma-1}\frac{t^2F^2(t)}{2(1+t)^{\gamma+4}}\right)'-
\e^2C_0^22^{-\gamma-2}F^2(t)\left(\frac{t^2}{(1+t)^{\gamma+4}}\right)'\\
&=&\d \left( \e^2C_0^22^{-\gamma-1}\frac{t^2F^2(t)}{2(1+t)^{\gamma+4}}\right)'\\
&&\qquad-\e^2C_0^22^{-\gamma-2}F^2(t)\{2t(t+1)^{-\gamma-4}
-t^2(\gamma+4)(1+t)^{-\gamma-5}\}
\end{array}
\]
hold for any $t\in [0,T)$. Here we set $t_{\gamma}:=\frac{2}{2+\gamma}$. We note that by the estimate $\gamma>-2$, the estimate $t_{\gamma}>0$ and $t_{\gamma}$ is independent of $\varepsilon$. We may assume that $T(\varepsilon)>t_{\gamma}$. Let $T\in (t_{\gamma},T(\varepsilon))$ and $t\in (t_{\gamma},T)$. Then the estimates
\[
\begin{array}{lll}
2t(t+1)^{-\gamma-4}-t^2(\gamma+4)(1+t)^{-\gamma-5}
=t(t+1)^{-\gamma-5}\{2-(\gamma+2)t\}\le0.
\end{array}
\]
Thus for any $t\in (t_{\gamma},T)$, the estimate
\[
\left(\frac{F'^2(t)}{2}\right)'
\ge
\left( \e^2C_0^22^{-\gamma-1}\frac{t^2F^2(t)}{2(1+t)^{\gamma+4}}\right)'
\]
holds. Integrating this inequality from $t_{\gamma}$ to $t$, the estimate
\begin{equation}
\label{positive-3}
F'^2(t)\ge \e^2C_0^22^{-\gamma-1}\frac{t^2F^2(t)}{(1+t)^{\gamma+4}}
+F'^2(t_{\gamma})
-\frac{\e^2C_0^22^{-\gamma-1}t_{\gamma}^2F^2(t_{\gamma})}{(1+t_{\gamma})^{\gamma+4}}
\end{equation}
holds for any $t\in (t_{\gamma},T)$.

Next we show that there exists a positive constant $\varepsilon_4=\varepsilon_4(\gamma,R)>0$ such that for any $\varepsilon\in (0,\varepsilon_4]$, the estimate
\begin{equation}
\label{positive-1}
\left\{F_{\varepsilon}'(t_{\gamma})\right\}^2
-\frac{\e^2C_0^22^{-\gamma-1}t_{\gamma}^2\left\{F_{\varepsilon}(t_{\gamma})\right\}^2}{(1+t_{\gamma})^{\gamma+4}}\ge0
\end{equation}
holds. Here we set
\[
\e_4:=\mathcal{M}^{-1}2^{(\gamma+1)/2}(1+t_{\gamma})^{(\gamma+9)/2}t_{\gamma}^{-1},
\]
where $\mathcal{M}$ is a positive constant independent of $\varepsilon$. By the estimate $\gamma\in \left(-\frac{1}{2},0\right)$, in the same manner as the proof of the estimate (\ref{3-39-3}), we can prove that there exists a positive constant $\varepsilon_5>0$ such that for any $\varepsilon \in [0,\varepsilon_5]$, the estimate
\begin{equation}
\label{4-7}
|u(x,t)|\le \mathcal{M}\e(t+|x|+2)^{-1}(t-|x|+2)^{-\gamma-1}
\end{equation}
holds for any $t\in [0,T)$ and $|x|\le t+1$. Set $\varepsilon_6:=\min(\varepsilon_4,\varepsilon_5)$. Let $\varepsilon\in (0,\varepsilon_6]$. Then by H\"older's inequality and the estimate (\ref{4-7}), the estimates
\begin{equation}
\label{positive-2}
\begin{array}{llll}
\d F(t_{\gamma})
&\le&\d
  C_1(1+t_{\gamma})^{3/2}\left(\int_{|x|\le 1+t_{\gamma}}u^2(x,t_{\gamma})dx\right)^{1/2}\\
&\le&\d C_1\mathcal{M}\e(1+t_{\gamma})^{3/2}\left(\int_{0}^{1+t_{\gamma}}(t_{\gamma}+r+2)^{-2}(t_{\gamma}-r+2)^{-2\gamma-2}dr\right)^{1/2} \\
&\le&\d C_1\mathcal{M}\e (1+t_{\gamma})
\end{array}
\end{equation}
hold, where $C_1$ is a positive constant independent of $\varepsilon$. Since the estimate $F'(t_{\gamma})\ge \e C_0 $ holds, by the estimate (\ref{positive-2}), the inequalities
\[
F'^2(t_{\gamma})
-\frac{\e^2C_0^22^{-\gamma-1}t_{\gamma}^2F^2(t_{\gamma})}{(1+t_{\gamma})^{\gamma+4}}
\ge \e^2C_0^2
-\e^4\mathcal{M}^2C_1^2C_0^22^{-\gamma-1}t_{\gamma}^2(1+t_{\gamma})^{-\gamma-2}\ge0
\]
hold for $0<\e\le\e_6$.
By the estimate (\ref{positive-3}), the inequality
\[
F'(t)\ge \e C_02^{-(\gamma+1)/2}\frac{tF(t)}{(1+t)^{(\gamma+4)/2}}
\]
holds for $t\in (t_{\gamma},T)$ and $0<\e\le\e_6$.
Set $t_0:=\max\{1,t_{\gamma}\}$. We note that $t_0$ is independent of $\varepsilon$. Then we may assume that $T(\varepsilon)>t_0$. Let $T\in (t_0,T(\varepsilon))$. Then the estimate
\[
F'(t)\ge\e C_02^{-(2\gamma+5)/2}t^{-1-\gamma/2}F(t)
\]
holds for any $t\in [t_0,T)$. We solve this ordinary differential inequality.
Since the estimate $F(t)>0$ holds, the inequality gives us
\[
(\log F(t))'\ge\e C_02^{-(2\gamma+5)/2}t^{-1-\gamma/2}
\]
for any $t\in [t_0,T)$. Integrating this inequality over $[t_0,t]$, the inequality
\[
\log F(t)-\log F(t_0)\ge\e C_02^{-(2\gamma+5)/2}(-2/\gamma)\{t^{-\gamma/2}-t_0^{-\gamma/2}\}
\]
holds for any $t\in [t_0,T)$. Let $t_1:=(2t_0^{-\gamma/2})^{-2/\gamma}$ and $t_2:=\max(t_0,t_1)$. Then we may assume that $T(\varepsilon)>t_2$. Let $T\in (t_2,T(\varepsilon))$. The estimate
\[
\log F(t)\ge \e C_02^{-(2\gamma+5)/2}(-\gamma^{-1})t^{-\gamma/2}+\log F(t_0)
\]
holds for any $t\in (t_2,T)$, which implies that the inequalities
\begin{equation}
\label{exp-est}
\begin{array}{lll}
F(t)&\ge& F(t_0)\exp\{ \e C_02^{-(2\gamma+5)/2}(-\gamma^{-1})t^{-\gamma/2}\}\\
&\ge&
\e C_0t_0 \exp\left( \e C_2 t^{-\gamma/2}\right)
\end{array}
\end{equation}
for any $t\in (t_2,T)$, where $C_2:=C_02^{-(2\gamma+5)/2}(-\gamma)^{-1}>0$ is a positive constant independent of $\varepsilon$.
We note that for any $\vartheta>0$, the identity $\exp \vartheta=\sum_{j=0}^{\infty}\frac{1}{j!}\vartheta^j$ holds. This convergence is uniform with respect to $\vartheta>0$. Then the estimate
\begin{equation}
\label{4-10}
    \exp\left(\varepsilon C_2t^{-\gamma/2}\right)\ge \frac{1}{j!}(\varepsilon C_2t^{-\gamma/2})^j
\end{equation}
holds. By combining the estimates (\ref{exp-est}) and (\ref{4-10}), the inequality
\[
    F(t)\ge \varepsilon^{1+j}C_0t_0\left(\frac{1}{j!}C_2^j\right)t^{-\frac{\gamma}{2}j}
\]
holds for any $t\in (t_2,T)$.

We turn back to the inequality (\ref{frame}). By the finite propagation speed and H\"older's inequality, the estimates
\begin{equation}
\label{frame-2}
F''(t)\ge 2^{-\gamma-2}\cdot3\pi^{-1}(1+t)^{-(\gamma+5)}F^3(t)
\end{equation}
hold for $t\in [0,T)$.
Let $\delta>0$. Then there exists $J_1(\delta)>0$ such that for any $j\in\N$ 
with $j\ge J_0(\delta)$, the estimate $\frac{2(j+1)}{\gamma(j+1)+3}>\frac{2}{\gamma}-\delta$ holds.
Set $J_2=\max\{J_1(\delta),[-3/\gamma-1]+1\}$, where $[\cdot]$ is the integral part. 
For $j= J_2$, we apply Lemma 2.1 in \cite{T15} (Improved Kato's lemma) with $R=1$,
\[
   p:=3,\ q:=\gamma+5>0, a:=-\frac{\gamma}{2}j>0,\ A:=\varepsilon^{1+j}C_0t_0\left(\frac{1}{j!}C_2^j\right),\ B:=2^{-\gamma-2}\cdot3\pi^{-1}
\]
and
\[
   M:=\frac{p-1}{2}a-\frac{q}{2}+1=-\frac{\gamma(j+1)+3}{2}>0.
\]

We set
\[
   T_0:=D_0A^{-\frac{p-1}{2M}}=D_0\varepsilon^{\frac{2(j+1)}{\gamma(j+1)+3}}\left(C_0t_0\right)^{\frac{2}{\gamma(j+1)+3}}\left(j!\right)^{\frac{2}{-\gamma(j+1)-3}}C_0^{\frac{2j}{\gamma(j+1)+3}},
\]
where $D_0=D_0(p,a,q,B)>0$ is a positive constant independent of $\varepsilon$, which appears in Lemma 2.1 in \cite{T15} (in his paper \cite{T15}, $D_0$ is written by $C_0$). 
%Since the estimates $\lim_{j\rightarrow\infty}\frac{2(j+1)}{\gamma(j+1)+3}=\frac{2}{\gamma}<0$ hold due to $\gamma<0$, there exists a large $J_0=J_0(\gamma)\in \N$ such that for any $j\in \N$ with $j\ge J_0$, the estimate $\frac{2(j+1)}{\gamma(j+1)+3}<0$ holds. Let $j\in \N$ satisfy $j\ge J_0$. 
Then there exists $\varepsilon_7(j)\in (0,1]$ such that for any $\varepsilon\in (0,\varepsilon_7(j)]$, the estimates
\[
    T_0\ge \max\left(\frac{F(0)}{F'(0)},R\right)=R
\]
hold, which implies that $T_1=T_0$ in Lemma 2.1 in \cite{T15}. Let $\varepsilon\in (0,\varepsilon_7(j))$. Thus by the conclusion of Lemma 2.1 in \cite{T15}, the estimates
\[
\begin{array}{lll}
   T<2^{\frac{2}{M}}T_1
&=&2^{-\frac{4}{\gamma(j+1)+3}}D_0\varepsilon^{\frac{2(j+1)}{\gamma(j+1)+3}}\left(C_0t_0\right)^{\frac{2}{\gamma(j+1)+3}}\left(j!\right)^{\frac{2}{-\gamma(j+1)-3}}
C_0^{\frac{2j}{\gamma(j+1)+3}}\\
&\le&2^{-\frac{4}{\gamma(j+1)+3}}D_0\varepsilon^{\frac{2}{\gamma}-\delta}\left(C_0t_0\right)^{\frac{2}{\gamma(j+1)+3}}\left(j!\right)^{\frac{2}{-\gamma(j+1)-3}}C_0^{\frac{2j}{\gamma(j+1)+3}}
\end{array}
\]
hold, which completes the proof of the theorem.

\end{proof}

\section*{Acknowledgement}
\par
The first author is supported by JST CREST Grant Number JPMJCR1913, Japan.
The first and third authors have been partially supported by
the Grant-in-Aid for Scientific Research (B) (No.18H01132) and Young Scientists Research (No.19K14581), Japan Society for the Promotion of Science.
The second author has been supported by RIKEN Junior Research Associate Program.
The third author has been partially supported
by the Grant-in-Aid for Scientific Research (B) (No.19H01795), Japan Society for the Promotion of Science.

%%%%%%%%%%%%%%%%%%%%%%%%%%%%%%%%%%%%%%
%%%%%%%%%%%% References %%%%%%%%%%%%%%%%%%%%
%%%%%%%%%%%%%%%%%%%%%%%%%%%%%%%%%%%%%%

\bibliographystyle{plain}

\end{document}